\DeclareMathAlphabet{\mathbbm}{U}{bbm}{m}{n}
\journal{Paper Submitted to Comm. Comput. Phys.}
\newtheorem{theorem}{Theorem}[section]
\newtheorem{definition}[theorem]{Definition}
\numberwithin{equation}{section}
\theoremstyle{definition}
\theoremstyle{remark}
\begin{document}

\begin{frontmatter}
	
       \title {{\bf Asymptotic preserving methods for the low mach limit in discrete velocity models approximating kinetic equations}\tnoteref{tnote1} 
	}

	\tnotetext[tnote1]{The work of Theresa K\"{o}fler has been supported by the European Union's Horizon Europe research and innovation programme under grant agreement No. 101086214 (DATAHYKING project).
	The work of Axel Klar has been supported by German Research Foundation (DFG) through grant  KL-1105-34,  ``Asymptotic preserving higher-order  meshfree schemes for kinetic equations".
	The work of Lorenzo Pareschi was partially supported by the Royal Society under the Wolfson Fellowship ``Uncertainty quantification, data-driven simulations and learning of multiscale complex systems governed by PDEs" and by the FIS2023-01334 Advanced Grant ``Tackling complexity: advanced numerical approaches for multiscale systems with uncertainties" (ADAMUS). The work of Giacomo Dimarco has been supported by by the Italian Ministry of University and Research (MUR) through the PRIN 2022 project (No. 2022KKJP4X) ``Advanced numerical methods for time dependent parametric partial differential equations with applications". This work has been written within the activities of GNFM and GNCS groups of INdAM (Italian National Institute of High Mathematics). Program codes can be found on \url{https://gitlab.rhrk.uni-kl.de/theresa.koefler/ap_low_mach}.
	}
	
	\author[FE,CMCS]{Giacomo \snm{Dimarco}}
	\ead{giacomo.dimarco@unife.it}

	\author[KA]{Axel \snm{Klar}}
	\ead{klar@mathematik.uni-kl.de}

	\author[FE,KA]{Theresa \snm{K\"{o}fler}}
	\ead{theresa.kofler@unife.it}

	\author[FE,HW]{Lorenzo \snm{Pareschi}}
	\ead{l.pareschi@hw.ac.uk}

%

	\address[KA]{Department of Mathematics, RPTU Kaiserslautern-Landau, Kaiserslautern, 67663, RLP, Germany}

	\address[FE]{Department of Mathematics and Computer Science, University of Ferrara, Ferrara, 44121, FE, Italy.}
	\address[CMCS]{Center for Modeling, Computing and Statistics, University of Ferrara, Ferrara, 44121, FE, Italy.}
	\address[HW]{Maxwell Institute for Mathematical Sciences and Department of Mathematics, School of Mathematical and Computer Sciences, Heriot-Watt University, Edinburgh, EH14 4AS, SCO, UK.}

\begin{abstract} 
We consider a Lattice Boltzmann–type discrete velocity model in the low Mach number scaling and develop a corresponding numerical scheme that remains uniformly valid across all regimes of the mean free path, from the kinetic to the hydrodynamic scale. The proposed framework ensures high-order temporal accuracy through the use of Implicit-Explicit Runge–Kutta methods, which provide stability and efficiency in stiff regimes, while spatial resolution is enhanced by combining finite-difference WENO reconstructions with high-order central difference approximations. In the appropriate asymptotic limit, the scheme reduces to a high-order finite-difference formulation of the incompressible Navier–Stokes equations, thereby guaranteeing physical consistency of the numerical approximation with the limit model. To corroborate the theoretical findings, a set of numerical experiments is performed on two-dimensional benchmark problems, which confirm the accuracy, stability, and versatility of the method across different flow regimes.

\end{abstract}
	
	\begin{keyword} 	 
	 	\KWD Discrete velocity models; lattice–Boltzmann method; asymptotic analysis; low Mach number limit; incompressible Navier–Stokes equations; numerical methods for stiff equations; IMEX schemes.\\
	 \MSC 35Q20 \sep 65M06 \sep 65L04 	
	 \end{keyword}
	
\end{frontmatter}


\section{Introduction}
Kinetic equations provide a natural framework for linking microscopic particle dynamics with macroscopic fluid descriptions \cite{Chap,Cer,Levermore91}. In particular, when both the Knudsen and Mach numbers are small, from such models, it is possible to asymptotically recover the incompressible Navier–Stokes (INS) equations \cite{Levermore91,Esposito}. From a numerical perspective, Discrete Velocity Methods (DVMs) offer a systematic strategy to approximate the Boltzmann equation and related kinetic models by replacing the continuous velocity space with a finite set of discrete velocities \cite{Dimarco_acta,Illner,Mieussens}. This leads to a system of coupled, through the collision operator, partial differential equations, each describing the transport of a particle population along one discrete direction. The accuracy of the method then depends on the choice of the discrete velocity set, on the treatment of the collision operator, which must guarantee the preservation of the conservation laws and assure consistency with macroscopic dynamics and on the choice of suitable time and space discretization strategies \cite{Dimarco_acta,Sterling}.

Among DVMs, the Lattice Boltzmann Method (LBM) has become particularly prominent in the last decades \cite{Li-Shi,ChenDoolen1998,BINZI1992145,benzi1992lattice,Inamuro}. In this framework, discrete velocities are constrained to lie on a regular lattice, enabling exact streaming of particle populations between neighboring nodes. This property ensures high computational efficiency, making LBM well suited for large-scale simulations. Despite the low number of discrete velocity used, through the appropriate design of equilibrium distributions and collision operators, LBMs are able to recover the Navier–Stokes equations in the hydrodynamic limit while retaining certain kinetic features that extend their applicability beyond the continuum regime. Compared to general DVMs, which may adopt more flexible velocity sets to handle rarefied or strongly non-equilibrium flows, LBMs are optimized for robustness and efficiency in the low Mach and moderate Knudsen number regimes \cite{benzi1992lattice}. 

From a different perspective, LBM and related formulations can also be interpreted as relaxation systems for the incompressible Navier–Stokes equations \cite{Mapundi}. From a numerical standpoint, relaxation-based schemes have been widely used both for discretizing such systems as well as for constructing methods for kinetic equations with stiff relaxation terms, in regimes ranging from fluid-dynamic to diffusive \cite{JinLevermore1996,Russo,JinPareschiToscani1998,Klar1998,JinPareschiToscani1999,NaldiPareschiToscani2002,Klar99,K992}. These approaches typically belong to the class of asymptotic-preserving (AP) methods, whose main objective is to ensure stability and accuracy uniformly across different asymptotic regimes, without the need to refine the discretization parameters according to the scale of the problem. In particular, AP schemes are designed so that, as the scaling parameter tends to zero, the numerical method automatically degenerates into a consistent discretization of the limiting macroscopic equations. This property makes them particularly effective in multiscale problems, where kinetic and fluid regimes coexist and interact \cite{DimarcoAP1,DimarcoAP2,KlarAP}.

In this context, Implicit–Explicit (IMEX) time integrator methods have emerged as a powerful tool for the time discretization of partial differential equations with stiff relaxation terms \cite{IMEX_PR,Bosc,Bosc1,Carp,AscherRuuthSpiteri1997,BoscarinoPareschiRusso2024,Ruuth,Multis,Bosc2}. These schemes typically treat stiff collision operators implicitly while handling transport terms explicitly, thus combining stability with computational efficiency. Unlike fully implicit methods, which are often too costly in high dimensions, IMEX schemes avoid the need to solve large nonlinear systems, while still permitting time steps that are not restricted by the small relaxation parameter. Moreover, when properly designed, IMEX methods inherit the asymptotic-preserving property, guaranteeing a smooth transition from the kinetic to the hydrodynamic regime without loss of accuracy. Their flexibility and high-order accuracy make them particularly well suited for coupling with advanced spatial discretizations, such as WENO \cite{Shu} or central difference schemes, in order to achieve robust and accurate solvers across a wide range of flow regimes.

In the standard LBM formulation, the discretization proceeds by introducing a spatial mesh aligned with the discrete velocities, yielding an explicit finite-difference scheme for a discrete-velocity Boltzmann equation with a BGK-type collision operator (cf. \cite{JUNK}). However, the explicit nature of the method limits its efficiency in the low Mach regime, where stiffness requires very fine meshes in both space and time to recover the incompressible limit equations accurately, leading to high computational costs, cf. \cite{DegondTang2011,DimarcoLoubereVignal2017}. To overcome these limitations, implicit or semi-implicit strategies have been proposed in the recent past for kinetic equations with stiff relaxation terms, producing AP schemes that allow for larger space and time steps while retaining accuracy and efficiency across scales \cite{Russo,JinAP1,JinAP2,JinAP3,JinPareschiToscani1998,JinPareschiToscani1999}.

The objective of this work is to develop a general methodology for constructing high-order schemes for the incompressible Navier-Stokes equations, derived from suitable discretizations of Lattice-Boltzmann type relaxation systems. Building on earlier ideas \cite{Klar1998, Mapundi,JinPareschiToscani1998,HuangXingXiong2025,Zeifang2020,BoscheriDimarcoLoubere2020}, we propose an arbitrarily high-order approach that remains uniformly stable with respect to the Knudsen and Mach numbers and which, in the low Mach limit, reduces to an IMEX high-order finite difference solver for the incompressible Navier-Stokes equations. The method combines nonoscillatory spatial discretizations with implicit- explicit (IMEX) Runge-Kutta time integrators. In this setting, the nonoscillatory upwind discretization of the convective part of the relaxation system naturally reduces, in the asymptotic limit, to a high-order treatment of the nonlinear convective terms of the incompressible equations. More specifically, we present a family of numerical schemes based on a Lattice-Boltzmann discrete six-velocity system with diffusive scaling, capable of achieving uniform accuracy in the low Mach number limit without refining the discretization in space and time. In the asymptotic limit, the proposed methods become high-order projection methods for the incompressible Navier-Stokes equations, solved on a collocated grid. 

The rest of the paper is organized as follows. Section \ref{sec2} recalls the connection between
kinetic theory and the incompressible Navier-Stokes equations while Section \ref{sec2.1} introduces the Lattice-Boltzmann type discrete six velocity model with diffusive scaling and the associated closed moment system which constitutes a one to one map between the Lattice Boltzmann and the macrosocopic fluid equations. Section \ref{sec3} is dedicated to the time and space discretizations that yield the proposed family of numerical schemes, based on IMEX Runge-Kutta methods, which in turn, we show to lead to a projection formulation for the incompressible Navier-Stokes equations. High-order spatial discretizations are then applied consistently within this framework. Section \ref{sec4} presents a set of numerical experiments that illustrate the accuracy, efficiency, and stability of the proposed approach. Finally, Section \ref{sec5} draws some conclusions and outlines directions for future research.

\section{Kinetic equations and the incompressible Navier-Stokes limit}\label{sec2}

We introduce here the underlying kinetic model and recall some results concerning its incompressible limit. These results will serve as the starting point for the derivation of the lattice Boltzmann model in the next section and will subsequently be used to establish the numerical connection between the two models. For a theoretical analysis of kinetic equations and their asymptotic limits, we refer to \cite{Levermore91}, while for related numerical discussions we refer to \cite{Klar99,JunkKlar,JinPareschiToscani1998}. We consider the following equation
\begin{equation}
	\partial_t F + v \cdot \nabla_x F = \frac{1}{\varepsilon} C(F), \label{eq:kinetic}
\end{equation}
where $F = F(x,v,t)$ is the so-called distribution function, which gives the probability for a particle to be located at the spatial position $x \in D \subset \mathbb{R}^d, d=1,2,3$, with velocity $v=(v_1,\dots,v_d) \in \mathbb{R}^d$, at time $t \in [0,\infty)$.
The operator $C(F)$ denotes a given collision operator, describing the interactions between particles. The invariants of the collision operator are assumed to be $1, v, |v|^2$, as in the leading example of the Boltzmann collision operator in rarefied gas dynamics.
Another well-known example is given by the BGK model (see \cite{Cer}), which simplifies the interactions by modeling them as a relaxation towards the equilibrium Maxwellian distribution.
Introducing the diffusive space–time scaling $x \mapsto x/\varepsilon$ and $t \mapsto t/\varepsilon^2$, where $\varepsilon$ is the Knudsen number (related to the mean free path between two consecutive collisions), one obtains the scaled equation
\begin{equation}
	\partial_t F + \frac{1}{\varepsilon} v \cdot \nabla_x F
	= \frac{1}{\varepsilon^2} C(F). \label{eq:scaled}
\end{equation}
We also introduce the Mach number $Ma$, denoting the ratio of the bulk velocity to the sound speed, and the Reynolds number $Re$, a dimensionless parameter representing the ratio of inertial to viscous forces within a fluid.
These numbers are related to $\varepsilon$ by the following relation:
\begin{equation}
	\varepsilon = \frac{Ma}{Re}. \label{eq:eps-ma-re}
\end{equation}
Thanks to a perturbation procedure (see, for example, \cite{Levermore91,Esposito}),
the limit equation for \eqref{eq:scaled} as $\varepsilon \to 0$ is obtained by searching for solutions of the form
\begin{equation}\label{eq:ansatz}
	F = M \big(1 + \varepsilon f \big),
\end{equation}
where $M$ is the normalized Maxwellian with zero drift,
\[
M(v) = \frac{1}{(2\pi)^{d/2}} \exp\!\left(-\frac{|v|^2}{2}\right).
\]
Substituting this ansatz into \eqref{eq:scaled} yields
\begin{equation}
	\varepsilon^2 \partial_t f + \varepsilon v \cdot \nabla_x f
	= L f + \varepsilon Q(f,f) + \mathcal{O}(\varepsilon^2), \label{eq:expansion}
\end{equation}
where $L$ denotes the linearized collision operator,
\[
Lf = M^{-1} D C(M) M f,
\]
and $Q$ is given by the second Fréchet derivative of $C$ around $M$:
\[
Q(f,f) = \frac{1}{2} M^{-1} D^2 C(M):(Mf \vee Mf),
\]
with $:$ denoting the contracted tensor product and $\vee$ the symmetric tensor product. In the following, we focus on the specific case of the BGK collision operator \cite{BGK}, which reads
\begin{equation}
	C(F)(v) = -\tfrac{1}{\tau} \Big( F(v) - M_F(v) \Big), \label{eq:bgk}
\end{equation}
where $M_F$ is the Maxwellian having the same moments with respect to $1, v, |v|^2$ as $F$, and $\tau$ is the collision frequency.
In this case one obtains
\[
L(f)(v) = -\frac{1}{\tau} (f - Pf)(v).
\]
where we have defined the projection onto the space of collision invariants,
\begin{equation}
	Pf = \langle f \rangle
	+ v \cdot \langle v f \rangle
	+ \left\langle \frac{|v|^2 - d}{d} f \right\rangle
	\frac{|v|^2 - d}{2}, \label{eq:projection}
\end{equation}
with
\[
\langle f \rangle = \int_{\mathbb{R}^d} f(v) M(v) \, dv
\]
and with $d$ the dimension of the velocity space, as already stated. Writing now $f$ as
\[
f = f_0 + \varepsilon f_1 + \varepsilon^2 f_2 + \cdots,
\]
and inserting into \eqref{eq:expansion}, collecting terms of equal power in $\varepsilon$, 
gives equations for $f_0, f_1, f_2, \dots$.
The solvability conditions for these equations yield
\[
f_0 = \rho_0 + v \cdot u_0 + \frac{|v|^2 - d}{2} T_0,
\]
where $u_0$ solves the incompressible Navier--Stokes equations
\begin{equation}
	\begin{aligned}
		\partial_t u_0 + u_0 \cdot \nabla_x u_0 + \nabla_x p &= \mu \Delta u_0, \\
		\nabla_x \cdot u_0 &= 0,
	\end{aligned} \label{eq:ns}
\end{equation}
with pressure $p$, temperature $T_0$ and density $\rho_0$ determined by a heat transfer equation and 
the Boussinesq relation (see \cite{Levermore91}).  
Finally, the viscosity coefficient $\mu$ is determined by the linearized collision operator $L$ and in the case of the BGK model is such that $\mu = \tau$.

\subsection{Lattice Boltzmann discrete velocity model}\label{sec2.1}
Taking inspiration from the BGK model \eqref{eq:kinetic},\eqref{eq:bgk}, we now derive a relaxation system under a diffusive scaling based on a lattice Boltzmann–type discrete velocity formulation.
In particular, we consider a six-velocity lattice on a hexagonal grid in two spatial dimensions.
As will be shown later, this approach leads to the incompressible Navier–Stokes equations in the low Mach number limit \cite{JUNK,Klar99}. To that aim, we introduce the following finite set of discrete velocities
\begin{align}
	v \in \{ c_1,\dots, c_N\}, \quad c_i \in \mathbb{R}^2.
\end{align}
Then, by taking $N=6$, we define the resulting six velocities as
\begin{align}
	c_i = \left(c_i^{(1)},c_i^{(2)} \right) = \left( \cos(\pi(i-1)/3),\sin(\pi(i-1)/3)\right), \quad i =1,\dots, 6.
\end{align}
At the same time, we define the following $N=6$ functions
\begin{align}
	F_i(x,t) = F(x,c_i,t),\quad i = 1,\dots,N
\end{align}
which scope is to mimic the probability distribution function at space position $x$ and velocity $c_i$ defined in the previous section. The Lattice Boltzmann discrete velocity model is then given by
\begin{align} \label{DiscVelocityModel2}
	\partial_t F_i + c_i \cdot \nabla_x F_i = \frac{1}{\varepsilon} C(F_i),
\end{align}
where the collision operator is of BGK-type and reads
\begin{align}
	C(F_i) = - \frac{1}{\tau} \left( F_i -F_i^{eq}\right),
\end{align}
with $\tau$ a suitable coefficient. In this model, the equilibrium distribution function $F_i^{eq}$ is defined by
\begin{align*}
	F_i^{eq} = \frac{\rho}{6} + \frac{1}{3} c_i \cdot u + \frac{2}{3} \left( c_i \otimes c_i - \frac{1}{2}I\right): u \otimes u,
\end{align*}
and it is constructed in such a way that in the diffusive limit the incompressible Navier-Stokes equations is recovered. The moments leading to the macroscopic equations are obtained in the same spirit of the moments of the distribution function $F$, i.e. by approximating the integrals over the continuous velocity space of the original model \eqref{eq:kinetic} with summations. Thus, in particular, the mass density is obtained by direct summation of the functions $F_i$, i.e.
\begin{align}\label{density}
	\rho = \sum_{i = 1}^N F_i(x,t)
\end{align}
while the momentum is obtained by multiplying each distribution function $F_i$ by the corresponding lattice velocity and then summing over all directions:
\begin{align}\label{momentum}
	u_k = \sum_{i = 1}^N c_i^{(k)} F_i(x,t), \quad k = 1,2.
\end{align}
We also define the following second order moments useful in the following
\begin{align*}
	\theta = \sum_{i=1}^N \frac{|c_i|^2}{2}F_i - \frac{\bar{\rho}}{2} = \frac{1}{2} \left(\sum_{i=1}^N F_i - \bar{\rho} \right),
\end{align*}
where $\bar{\rho}$ is an arbitrary constant that will be fixed later.
Now, with the aim of reformulating the Lattice–Boltzmann model in an equivalent form, where the unknowns are expressed in terms of the moments of the distribution functions $F_i$, we introduce three additional second order moments:
\begin{align}
	v_1 &=  \sum_{i=1}^N \left( ( c_i^{(1)})^2 - \frac{|c_i|^2}{2}\right) F_i = \sum_{i=1}^N \left( ( c_i^{(1)})^2 - \frac{1}{2}\right) F_i, \\
	v_2 &= \sum_{i=1}^N c_i^{(1)}c_i^{(2)} F_i, \\
	p &= \theta - \frac{|u|^2}{2}
\end{align}
and finally one third order moment
\begin{align}\label{q}
	q = \sum_{i=1}^N c_i^{(1)} \left(( c_i^{(2)})^2 - \frac{1}{4} \right) F_i.
\end{align}
Let us observe that, since the space orthogonal to the space of collision invariants is spanned by 
\begin{equation}\label{mom}
( c_i^{(1)})^2 - \frac{|c_i|^2}{2}, \quad c_i^{(1)}c_i^{(2)}, \quad c_i^{(1)}\left(( c_i^{(2)})^2 - \frac{1}{4} \right),
\end{equation}
it follows that there exists a one–to–one correspondence between the distribution functions $F_i$, $i=1,\dots,N$, and the moments $u_1, u_2, \theta, v_1, v_2, q$ defined above. Now, in analogy with the continuous model discussed in the previous chapter, we introduce the diffusive space–time scaling
\begin{align} \label{DiscScaledEqu}
	\partial_t F_i + \frac{1}{\varepsilon} c_i \cdot \nabla_x F_i = -\frac{1}{\varepsilon^2}C(F_i).
\end{align} 
We then insert the ansatz $F_i = M_i(1+ \varepsilon f_i)$ into \eqref{DiscScaledEqu}, where the normalised Maxwellian $M_i$ becomes in this case just equal to $\frac{1}{6}$. This leads to
\begin{align}\label{DiscScaledequ2}
	\partial_t f_i + \frac{1}{\varepsilon} c_i \cdot \nabla_x f_i = -\frac{1}{\varepsilon^2}C(f_i),
\end{align}
with
\begin{align}
	C(f_i) = L(f_i) + \varepsilon Q(f_i,f_i),
\end{align}
where
\begin{align}
	L(f_i) = - \frac{1}{\tau} \left(f_i - f_i^{leq} \right),
\end{align}
with the linearised equilibrium distribution
\begin{align}
	f_i^{leq} = \frac{\rho}{6} + \frac{1}{3} c_i \cdot u
\end{align}
and 
\begin{align}
	Q(f_i,f_i) = \frac{2}{3 \tau} \left(c_i \otimes c_i - \frac{1}{2}I \right): u \otimes u.
\end{align}
From the above definitions and the chosen ansatz, we observe that letting $\varepsilon \to 0$ implies $F_i \to F_i^{eq}$. Thus, $v_1, v_2, q \to 0$ as $\varepsilon \to 0$.
In the same limit, one also obtains $\nabla_x \rho \to 0$, which means that, under suitable boundary conditions, the density becomes constant in both space and time. We now proceed by rescaling the quantities
\begin{align*}
	\theta &= \frac{1}{2\varepsilon} \left(\sum_{i=1}^N f_i - \bar{\rho} \right) \\
	v_1 &= \frac{1}{\varepsilon} \sum_{i=1}^N \left( (c_i^{(1)})^2 - \frac{1}{2}\right)f_i \\
	v_2 &= \frac{1}{\varepsilon} \sum_{i=1}^N c_i^{(1)}c_i^{(2)}f_i \\
	q&= \frac{1}{\varepsilon^2} \sum_{i=1}^N c_i^{(1)} \left((c_i^{(2)})^2 -\frac{1}{4} \right)f_i.
\end{align*}
Then, multiplying \eqref{DiscScaledequ2} by $c_i^{(1)},c_i^{(2)}$ and by the quantities of \eqref{mom}, choosing $\bar{\rho}$ to be the constant limit of $\rho$ as $\varepsilon \rightarrow 0$,
one gets the corresponding moment system which reads
\begin{align} \label{ScaledMomentSystem}
	\partial_t u_1 - \partial_x v_1 + \partial_y v_2 + \partial_x \theta &= 0 \\
	\partial_t u_2 + \partial_x v_2 + \partial_y v_1 + \partial_y \theta &= 0 \\
	\partial_t \theta + \frac{1}{2\varepsilon^2} \left( \partial_x u_1 + \partial_y u_2 \right) &= 0 \\
	\partial_t v_1 + \frac{1}{4\varepsilon^2} \left( - \partial_x u_1 + \partial_y u_2 \right) +  \partial_x q &= - \frac{1}{\varepsilon^2\tau} \left( v_1 + \frac{1}{2} \left(u_1^2 - u_2^2 \right)\right) \\
	\partial_t v_2 + \frac{1}{4\varepsilon^2} \left(  \partial_x u_2 + \partial_y u_1 \right) +  \partial_y q &= - \frac{1}{\varepsilon^2\tau} \left( v_2 -  u_1 u_2\right) \\ \label{ScaledMomentSystem1}
	\partial_t q + \frac{1}{2\varepsilon^2} \left( \partial_x v_1 + \partial_x v_2 \right) &= - \frac{1}{\varepsilon^2\tau} q.
\end{align}
The diffusion limit can be now computed straightforwardly. For $\varepsilon \rightarrow 0$, the equation of $\theta$ becomes the incompressibility condition
\begin{align}
	\nabla \cdot u = 0.
\end{align}
The equations for $v_1$ and $v_2$ give
\begin{align}
	v_1 &= -\frac{1}{2}\left( u_1^2 - u_2^2\right) -\frac{\tau}{4} \left(-\partial_x u_1 + \partial_y u_2 \right), \\
	v_2 &= u_1u_2 - \frac{\tau}{4}\left( \partial_x u_2 + \partial_y u_1 \right).
\end{align}
This yields
\begin{align}
	\partial_t u_1 + \partial_x  \frac{1}{2}\left( u_1^2 - u_2^2\right)  +\partial_y \left(u_1u_2  \right) 
	+  \partial_x \theta &= \frac{\tau}{4} \Delta u_1 \\
	\partial_t u_2 + \partial_x  u_1u_2 -\partial_y \frac{1}{2}\left( u_1^2 - u_2^2\right) 
	+  \partial_y \theta &= \frac{\tau}{4} \Delta u_2.
\end{align}
By using the definition of $p$ and vector notation, we obtain the incompressible Navier Stokes equations with $Re = \frac{4}{\tau}$:
\begin{align}\label{INS1}
	\partial_t u + (u \cdot \nabla_x)u + \nabla_x p &= \frac{\tau}{4} \Delta u \\
	\nabla_x \cdot u &= 0. \label{INS2}
\end{align}
The above formal derivation can be rigorously proven using a stability concept developed in \cite{Yong2001}. We refer to \cite{Junk_Yong} and \cite{Banda_Yong_Klar} for further details and convergence proofs.

\section{A class of numerical methods for the Lattice-Boltzmann model in the diffusion scaling}\label{sec3}
The purpose of this section is to introduce a family of numerical methods which, starting from the lattice Boltzmann model defined in the previous section, provide uniformly stable approximations with respect to the small parameter $\varepsilon$. In the asymptotic limit 
$\varepsilon\to 0$, these methods yield high–order accurate approximations of the incompressible Navier–Stokes equations. We start by rewriting the closed moment system \eqref{ScaledMomentSystem}-\eqref{ScaledMomentSystem1} derived in the previous section in vector form. This reads
\begin{equation}\begin{split} \label{VectorForm}
		\partial_t u + \nabla_x \cdot B(v) + \nabla_x \theta &= 0 \\
		\partial_t \theta + \frac{1}{2 \varepsilon^2} \nabla_x \cdot u &= 0 \\
		\partial_t v + \frac{1}{4\varepsilon^2} \nabla_x \cdot B(u) + \nabla_x q &= - \frac{1}{\varepsilon^2 \tau} \left(v- F(u) \right)\\
		\partial_t q + \frac{1}{2\varepsilon^2} \nabla_x \cdot v &= - \frac{1}{\varepsilon^2 \tau} q
	\end{split}
\end{equation}
with 
\[v = \begin{pmatrix}
	v_1 \\ v_2
\end{pmatrix},\quad u = \begin{pmatrix}
	u_1 \\ u_2
\end{pmatrix}, \quad F(u) = \begin{pmatrix}
	\frac{1}{2}(u_2^2-u_1^2) \\ u_1u_2
\end{pmatrix}, \quad B(\omega) = \begin{pmatrix}
-\omega_1 & \omega_2 \\ \omega_2 & \omega_1
\end{pmatrix} 
\]
where $\omega$ represents any two dimensional vector. In the following section, we first introduce the time discretization and then we discuss the properties of the semi-discrete system while in section \ref{sec:sd} we apply a proper space discretization to the resulting semi-discrete scheme.

\subsection{Time discretization}
The main idea is to use a suitable combination of implicit–explicit (IMEX) Runge–Kutta discretizations \cite{IMEX_PR,Bosc1}. The main advantage of the proposed strategy is that, compared to a fully explicit method, the time step is not restricted by the stiffness of the system, while compared to a fully implicit scheme, the solution procedure is more straightforward, since the system to be solved is less involved. We will show that this choice is sufficient to obtain, in the mean–free–path limit, a projection scheme for the incompressible Navier–Stokes equations. The general IMEX–RK schemes can be conveniently represented by a double Butcher tableau of the following form:
\begin{align}
	\text{Explicit: } 
	\begin{array}{c|c}
		\tilde{c} & \tilde{A} \\ \hline \\ [-1.9ex]
		& \tilde{w}^T
	\end{array}
	\qquad
	\text{Implicit: }
	\begin{array}{c|c}
		c & A \\ \hline \\[-1.9ex]
		& w^T
	\end{array}
\end{align}
For the explicit scheme, $\tilde{A}$ is an $s \times s$ lower–triangular matrix with zero diagonal entries.
For the implicit scheme, the $s \times s$ matrix $A$ is restricted to the class of diagonally implicit Runge–Kutta (DIRK) methods, which means that $a_{ij} = 0$ for $j > i$. The vectors $w$ and $\tilde{w}$ contain the quadrature weights used to combine the internal stages of the Runge–Kutta method, while the coefficients $c$ and $\tilde{c}$ are defined as
\[c_i = \sum_{j=1}^{i} a_{ij}, \quad \tilde{c}_i = \sum_{j=1}^{i-1} \tilde{a}_{ij}.\]
The following definitions characterize the IMEX schemes and the properties we aim to consider in this work. For further details we refer for instance to \cite{IMEX_PR}.
\begin{definition}
	An IMEX RK method is said to be of type A, if the matrix $A \in \mathbb{R}^{s \times s}$ is invertible, or equivalently $a_{ii} \neq 0$, $i = 1,\dots,s$.
\end{definition}
\begin{definition}
	An IMEX RK method is said to be of type CK, if the matrix $A \in \mathbb{R}^{s \times s}$ can be written as
	\begin{align*}A=\left(
		\begin{matrix}
			0 & 0 \\ a & \hat{A}
		\end{matrix}\right)
	\end{align*}
	with $a = (a_{21},\dots,a_{s1})^T \in \mathbb{R}^{(s-1)}$ and the submatrix $\hat{A} \in \mathbb{R}^{(s-1) \times (s-1)}$ invertible. 
\end{definition}
\begin{definition}
	An IMEX RK method is called implicitly stiffly accurate (ISA), if the corresponding DIRK method is stiffly accurate, which means that
	\begin{align*}
		a_{si} = w_i, \qquad i = 1,\dots,s.
	\end{align*}
	If additionally the explicit method satisfies
	\begin{align}
		\tilde{a}_{si} = \tilde{w}_i, \qquad i = 1,\dots,s-1,
	\end{align}
	the IMEX RK method is called globally stiffly accurate (GSA).
\end{definition}
Note that the numerical solution of a GSA IMEX RK scheme coincides exactly with the last internal stage \cite{IMEX_PR},\cite{Hairer}. We now apply the above general IMEX-RK approach to the moment system \eqref{VectorForm}. We proceed by separating the implicit and the explicit components as following
\begin{equation}
\begin{split} \label{implicit-explicit}
		\partial_t u +  \underbrace{\nabla_x \cdot B(v) + \nabla_x \theta}_{implicit} &= 0 \\
		\partial_t \theta + \underbrace{\frac{1}{2 \varepsilon^2} \nabla_x \cdot u}_{implicit} &= 0 \\
		\partial_t v +  \underbrace{\frac{1}{4\varepsilon^2} \nabla_x \cdot B(u) + \nabla_x q}_{explicit} &= - \frac{1}{\varepsilon^2 \tau} \left( \underbrace{v}_{implicit}-  \underbrace{F(u)}_{explicit} \right)\\
		\partial_t q +  \underbrace{\frac{1}{2\varepsilon^2} \nabla_x \cdot v}_{implicit} &= -  \underbrace{\frac{1}{\varepsilon^2 \tau} q}_{implicit}
	\end{split}
\end{equation}
This reads for the internal stages
\begin{equation}
	\begin{split} \label{InternalStages}
		u^{(i)} &= u^n - \Delta t \left[\sum_{j=1}^i a_{ij} \nabla_x \cdot B(v^{(j)}) + \sum_{j=1}^i a_{ij} \nabla_x  \theta^{(j)} \right] \\
		\theta^{(i)} &= \theta^n - \Delta t\left[\frac{1}{2 \varepsilon^2}\sum_{j=1}^{i-1} a_{ij} \nabla_x \cdot u^{(j)} \right] - \Delta t \frac{1}{2\varepsilon^2} a_{ii} \nabla_x \cdot u^{(i)} \\
		v^{(i)} &= v^n - \Delta t \left[ \sum_{j=1}^{i-1} \tilde{a}_{ij} \left( \frac{1}{4 \varepsilon^2} \nabla_x \cdot B(u^{(j)}) + \nabla_x q^{(j)} - \frac{1}{\tau \varepsilon^2} F(u^{(j)})\right)\right]\\ &- \Delta t \sum_{j=1}^{i-1} a_{ij} \frac{1}{\tau \varepsilon^2} v^{(j)} -  \Delta t  a_{ii} \frac{1}{\tau \varepsilon^2} v^{(i)} \\
		q^{(i)} &= q^n - \Delta t \left[ \sum_{j=1}^i a_{ij} \frac{1}{2\varepsilon^2} \nabla_x \cdot v^{(j)} + \sum_{j=1}^{i-1} a_{ij} \frac{1}{\tau \varepsilon^2} q^{(j)}\right] - \Delta t \frac{a_{ii}}{\tau \varepsilon^2} q^{(i)}
	\end{split}
\end{equation}
while the numerical solution reads
\begin{equation}\label{FinalStep}
	\begin{split}
		u^{n+1} &= u^n - \Delta t \left[ \sum_{i=1}^s w_i \left( \nabla_x \cdot B(v^{(i)}) + \nabla_x \theta^{(i)} \right)\right] \\
		\theta^{n+1} &= \theta^n  - \Delta t \left[ \sum_{i=1}^s w_i \frac{1}{2\varepsilon^2} \nabla_x u^{(i)}\right] \\
		v^{n+1} &= v^n  - \Delta t \left[ \sum_{i=1}^{s} \tilde{w}_{i} \left( \frac{1}{4 \varepsilon^2} \nabla_x \cdot B(u^{(i)}) + \nabla_x q^{(i)} - \frac{1}{\tau \varepsilon^2} F(u^{(i)})\right)+ \sum_{i=1}^{s} w_{i} \frac{1}{\tau \varepsilon^2} v^{(i)}\right]\\
		q^{n+1} &= q^n  - \Delta t \left[ \sum_{i=1}^s w_{i} \left( \frac{1}{2\varepsilon^2} \nabla_x \cdot v^{(i)} +  \frac{1}{\tau \varepsilon^2} q^{(i)}\right)\right].
	\end{split}
\end{equation}
Let observe that since we are only considering GSA IMEX RK schemes, it is sufficient to compute the internal stages, because the last internal stages coincide with the numerical solution. We are now able to prove the following general result for the semi-discrete schemes.
\begin{theorem}
	If the IMEX method is of type A and satisfies the GSA property, then in the fluid dynamic limit $\varepsilon \rightarrow 0$ the IMEX RK scheme \eqref{InternalStages} becomes a consistent discretization for the incompressible Navier Stokes equations. 
\end{theorem}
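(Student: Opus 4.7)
The plan is to take the limit $\varepsilon\to 0$ in the internal stages \eqref{InternalStages} and show that the resulting stagewise system coincides with a standard IMEX-RK projection-type discretization of \eqref{INS1}-\eqref{INS2}. Because the method is GSA, the final update satisfies $u^{n+1}=u^{(s)}$, $\theta^{n+1}=\theta^{(s)}$, $v^{n+1}=v^{(s)}$ and $q^{n+1}=q^{(s)}$, so it is enough to analyse the $s$ internal stages themselves.

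The first concrete step is to extract the incompressibility constraint. Multiplying the $\theta$-stage equation in \eqref{InternalStages} by $2\varepsilon^{2}/\Delta t$ and letting $\varepsilon\to 0$ produces the purely algebraic relation
\begin{equation*}
\sum_{j=1}^{i} a_{ij}\,\nabla_{x}\cdot u^{(j)} = 0,\qquad i=1,\dots,s,
\end{equation*}
and since the type A hypothesis makes $A$ invertible, this forces $\nabla_{x}\cdot u^{(j)} = 0$ at every stage. The same rescaling applied to the $v$-stage (multiplication by $\tau\varepsilon^{2}$), combined with the boundedness of $q^{(j)}$ in the limit (which follows from an analogous manipulation of the $q$-stage together with invertibility of $A$), yields as $\varepsilon\to 0$
\begin{equation*}
\sum_{j=1}^{i} a_{ij}\, v^{(j)} = \sum_{j=1}^{i-1}\tilde{a}_{ij}\Bigl[F(u^{(j)}) - \tfrac{\tau}{4}\nabla_{x}\cdot B(u^{(j)})\Bigr],
\end{equation*}
which is precisely the discrete counterpart of the Chapman-Enskog closure $v = F(u) - \tfrac{\tau}{4}\nabla_{x}\cdot B(u)$ used in the formal derivation of \eqref{INS1}-\eqref{INS2}.

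The second step is to substitute this identity into the $u$-stage equation. By linearity of $B$ and of the divergence, $\sum_{j} a_{ij}\nabla_{x}\cdot B(v^{(j)})$ collapses to $\sum_{j=1}^{i-1}\tilde{a}_{ij}\nabla_{x}\cdot B\bigl[F(u^{(j)}) - \tfrac{\tau}{4}\nabla_{x}\cdot B(u^{(j)})\bigr]$. Two elementary identities, namely $\nabla_{x}\cdot B(\nabla_{x}\cdot B(u)) = \Delta u$ and, for divergence-free $u$, $\nabla_{x}\cdot B(F(u)) = (u\cdot\nabla_{x})u - \tfrac{1}{2}\nabla_{x}|u|^{2}$, then produce the convective and viscous terms of the Navier-Stokes equation with explicit weights $\tilde{a}_{ij}$. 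The gradient $\nabla_{x}\theta^{(j)}$ is retained with the implicit weights $a_{ij}$ and plays the role of a Lagrange multiplier enforcing the stagewise incompressibility already obtained. Absorbing $\tfrac{1}{2}|u|^{2}$ into an effective pressure, each limiting stage coincides exactly with the $i$-th stage of a standard IMEX-RK projection scheme for \eqref{INS1}-\eqref{INS2} on a collocated grid.

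The part that will require the most care is the bookkeeping of the two tableaux in the limit: the convective and viscous pieces arrive through $\tilde{A}$, because they enter via the explicit flux of $v$ and the explicit $F(u)$ term, whereas the pressure-like contribution from $\theta$ arrives through $A$. This asymmetry is the natural structure of an IMEX splitting for \eqref{INS1}-\eqref{INS2} rather than an obstruction, so that provided the pair $(A,\tilde{A},w,\tilde{w})$ satisfies the standard IMEX-RK consistency conditions, the limit method is a genuine IMEX-RK discretization of the incompressible Navier-Stokes equations and is therefore consistent, which is the claim.
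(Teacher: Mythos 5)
Your proposal is correct and follows the same overall architecture as the paper's proof: solve the $v$-stage for $v^{(i)}$, pass to the limit to obtain the discrete Chapman--Enskog closure $\sum_{j\le i}a_{ij}v^{(j)}=\sum_{j<i}\tilde a_{ij}\bigl(F(u^{(j)})-\tfrac{\tau}{4}\nabla_x\cdot B(u^{(j)})\bigr)$, substitute into the $u$-stage using linearity of $B$, apply the two identities $\nabla_x\cdot B(\nabla_x\cdot B(u))=\Delta_x u$ and $\nabla_x\cdot B(F(u))=(u\cdot\nabla_x)u-\tfrac12\nabla_x|u|^2$ for divergence-free $u$, and invoke GSA so that the last stage is the update. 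Where you genuinely differ is in the extraction of the incompressibility constraint: you multiply the $\theta$-stage by $2\varepsilon^2/\Delta t$ to get the algebraic relation $\sum_{j\le i}a_{ij}\nabla_x\cdot u^{(j)}=0$ and then use the type-A invertibility of the lower-triangular $A$ (forward substitution) to conclude $\nabla_x\cdot u^{(j)}=0$ at every stage. This is more direct than the paper's route, which eliminates $u^{(i)}$ from the $\theta$-stage to obtain the Helmholtz equation \eqref{Theta_i} and then runs an induction over the stages. The price of your shortcut is that it presupposes the formal boundedness of $\theta^{(i)}$ as $\varepsilon\to 0$ (otherwise the term $\tfrac{2\varepsilon^2}{\Delta t}\theta^{(i)}$ need not vanish); the paper's Helmholtz formulation is what exhibits $\theta^{(i)}$ as the solution of an elliptic problem with data that stay bounded in the limit, and, more importantly, it is what yields the limiting pressure Poisson equation \eqref{Glg2}. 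That equation is the part of the paper's conclusion your argument does not reproduce: you assert that $\nabla_x\theta^{(j)}$ acts as a Lagrange multiplier, but you never show how $\theta^{(i)}$ (equivalently $p^{(i)}$) is actually determined in the limit scheme, which is what makes the limit a well-defined projection-type discretization of \eqref{INS1}--\eqref{INS2} rather than a constrained system with an unspecified multiplier. So your proof establishes consistency of the momentum update but should be completed with the derivation of the stage Poisson equations for $\theta^{(i)}$ to match the full statement as the paper proves it.
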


\begin{proof}
	By rewriting the equation of the internal stages $v^{(i)}$, we obtain
	\begin{align}
		v^{(i)} &= \frac{\tau \varepsilon^2}{\tau \varepsilon^2 + \Delta t a_{ii}}v^n \nonumber \\&- \Delta t \frac{1}{\tau \varepsilon^2 + \Delta t a_{ii}}\left[ \sum_{j=1}^{i-1} \tilde{a}_{ij} \left( \frac{\tau }{4} \nabla_x \cdot B(u^{(j)}) + \tau \varepsilon^2 \nabla_x q^{(j)} -  F(u^{(j)})\right)\right] \label{V_i}\\ \nonumber &- \Delta t \sum_{j=1}^{i-1} a_{ij} \frac{1}{\tau \varepsilon^2 + \Delta t a_{ii}} v^{(j)}.
	\end{align}
Then, by inserting the above expression into the first equation of \eqref{InternalStages}
and by taking the limit $\varepsilon \rightarrow 0$, we obtain 
	\begin{align}\nonumber
		u^{(i)} &= u^n - \Delta t \left[\sum_{j=1}^{i-1} a_{ij} \nabla_x \cdot B(v^{(j)}) + \sum_{j=1}^i a_{ij} \nabla_x  \theta^{(j)} \right] \\ &- \Delta t \nabla_x \cdot B\left( -\sum_{j=1}^{i-1} \tilde{a}_{ij} \left( \frac{\tau }{4} \nabla_x \cdot B(u^{(j)}) -  F(u^{(j)})\right) - \sum_{j=1}^{i-1} a_{ij} v^{(j)} \right).\label{u_i}
	\end{align}
	Rewriting gives
	\begin{align}\nonumber
		u^{(i)} &= u^n - \Delta t \left[\sum_{j=1}^{i-1} a_{ij} \nabla_x \cdot B(v^{(j)}) + \sum_{j=1}^i a_{ij} \nabla_x  \theta^{(j)} \right] \\ &- \Delta t \nabla_x \cdot B\left( -\sum_{j=1}^{i-1} \tilde{a}_{ij} \left( \frac{\tau }{4} \nabla_x \cdot B(u^{(j)}) -  F(u^{(j)})\right) \right)  + \Delta t \sum_{j=1}^{i-1} a_{ij} \nabla_x \cdot B (v^{(j)}) .\label{u_eq}
	\end{align}
We now consider the stage equations for $\theta$. Substituting $u^{(i)}$ using \eqref{InternalStages}, gives 
\begin{align}
	\theta^{(i)} = \theta^n - \Delta t\left[\frac{1}{2 \varepsilon^2}\sum_{j=1}^{i-1} a_{ij} \nabla_x \cdot u^{(j)} \right] - \Delta t \frac{1}{2\varepsilon^2} a_{ii} \nabla_x \cdot u^{(i)}_* + \Delta t^2 \frac{1}{2\varepsilon^2} a_{ii}^2 \Delta_x  \theta^{(i)},
\end{align}
where
\begin{align}
	u^{(i)}_* &= u^n - \Delta t \left[\sum_{j=1}^i a_{ij} \nabla_x \cdot B(v^{(j)}) + \sum_{j=1}^{i-1} a_{ij} \nabla_x  \theta^{(j)} \right].
\end{align}
Rewriting gives
\begin{align}
	\Delta_x \theta^{(i)} - \frac{2\varepsilon^2}{\Delta t^2 a_{ii}^2} \theta^{(i)} = - \frac{2\varepsilon^2}{\Delta t^2 a_{ii}^2} \theta^n + \frac{1}{\Delta t a_{ii}^2} \sum_{j=1}^{i-1} a_{ij} \nabla_x \cdot u^{(j)} +  \frac{1}{\Delta t a_{ii}} \nabla_x \cdot u^{(i)}_*. \label{Theta_i}
\end{align}
Now, in the limit $\varepsilon \rightarrow 0$, we get for the first stage 
\begin{align}
\Delta_x \theta^{(1)}=\frac{1}{\Delta t a_{11}} \nabla_x \cdot u^{n}- \nabla_x \cdot \nabla_x \cdot (B(v^{(1)})).	
\end{align}
 Taking the divergence of \eqref{u_eq} in the case of the first stage gives
	 \begin{align}
		\nabla_x \cdot u^{(1)} = \nabla_x \cdot u^n - \nabla_x \cdot u^{n}+ \Delta t a_{11} \nabla_x \cdot \nabla_x \cdot (B(v^{(1)}))=0. \label{induction_start}
	\end{align}
For the inductive step $(i-1) \mapsto i$, \eqref{u_i} gives:
	 \begin{align}\nonumber
		\nabla_x \cdot u^{(i)} &= \nabla_x \cdot u^n - \Delta t \left[\sum_{j=1}^{i-1} a_{ij} \nabla_x \cdot \nabla_x \cdot B(v^{(j)}) + \sum_{j=1}^{i} a_{ij} \Delta_x  \theta^{(j)} \right] \\ &+ \Delta t \nabla_x \cdot \nabla_x \cdot B\left( \sum_{j=1}^{i-1} \tilde{a}_{ij} \left( \frac{\tau }{4} \nabla_x \cdot B(u^{(j)}) -  F(u^{(j)})\right) \right)  + \Delta t \sum_{j=1}^{i-1} a_{ij}\nabla_x \cdot \nabla_x \cdot B (v^{(j)}) . \label{induction_step}
	\end{align}	
Now, using \eqref{Theta_i} and the induction hypothesis $\nabla_x \cdot u^{(i-1)} = 0$ gives in the limit 
	\begin{align}
	\sum_{j=1}^{i} a_{ij} \Delta_x  \theta^{(j)} = \sum_{j=1}^{i-1} a_{ij} \Delta_x  \theta^{(j)} + \frac{1}{\Delta t} \left( \nabla_x \cdot u^n - \Delta t \left(\sum_{j=1}^{i}a_{ij}\nabla_x \cdot \nabla_x \cdot B(v^{(j)}) +\sum_{j=1}^{i-1} a_{ij} \Delta_x  \theta^{(j)} \right)\right). \label{sum_lapl_theta}
	\end{align}
	While, for $\varepsilon \rightarrow 0$, equation \eqref{V_i} gives
	\begin{align}
	v^{(j)} = - \frac{1}{a_{jj}} \left( \sum_{k=1}^{j-1} \tilde{a}_{jk} \left(\frac{\tau}{4} \nabla_x \cdot B(u^{(k)}) - F(u^{(k)}) \right)  + \sum_{k=1}^{j-1} a_{jk} v^{(k)}\right). \label{v_j_eps_0}
	\end{align}
	Plugging \eqref{sum_lapl_theta} and \eqref{v_j_eps_0} into \eqref{induction_step}, we obtain in the incompressible limit
	\begin{align}
	\nabla_x \cdot u^{(i)} = 0.
	\end{align}
	Now, by using the following properties
	\begin{align}
		\nabla_x \cdot B(\nabla_x \cdot B(u)) &= \Delta_x u \\
		\nabla_x \cdot B(F(u)) &= (u\cdot \nabla_x) u - \nabla_x \left( \frac{|u|^2}{2}\right) + u \nabla \cdot u
	\end{align}
	and the divergence–free condition for all internal stages of $u$, we get
	\begin{align}\label{u_s}
		u^{(i)} &= u^n - \Delta t \left[ \sum_{j=1}^i a_{ij} \nabla_x  \theta^{(j)} \right] \nonumber \\ &- \Delta t  \sum_{j=1}^{i-1} \tilde{a}_{ij} \left( -\frac{\tau }{4} \Delta_x u^{(j)} +  (u^{(j)}\cdot \nabla_x) u^{(j)} - \nabla_x \left( \frac{|u^{(j)}|^2}{2}\right)\right).  
	\end{align}
Due to the fact that we use an GSA IMEX RK scheme, the above relation is enough to state that the numerical solution becomes 
	\begin{align}\nonumber
		u^{n+1} &= u^n - \Delta t \left[ \sum_{i=1}^s w_{i} \nabla_x  \theta^{(i)} \right] \\ &- \Delta t  \sum_{i=1}^{s} \tilde{w}_{i} \left( -\frac{\tau }{4} \Delta_x u^{(i)} +  (u^{(i)}\cdot \nabla_x) u^{(i)} - \nabla_x \left( \frac{|u^{(i)}|^2}{2}\right)\right).  
	\end{align}
Finally, because by definition, we have
	\begin{align}\label{Theta_P_Prop}
		\theta^{(i)} = p^{(i)} + \frac{|u^{(i)}|^2}{2}
	\end{align}
and since $\sum_{i=1}^s \tilde{w}_i = \sum_{i=1}^s w_i$, we obtain for the numerical solution of the velocity field $u$ the following expression
	\begin{align}\label{Glg1}
		u^{n+1} &= u^n - \Delta t \left( \sum_{i=1}^{s} \tilde{w}_{i} \left( -\frac{\tau }{4} \Delta_x u^{(i)} +  (u^{(i)}\cdot \nabla_x) u^{(i)} \right)
		+  \sum_{i=1}^s w_{i} \nabla_x  p^{(i)} \right), 
	\end{align}
which is a consistent high order Implicit-Explicit time discretization of equation \eqref{INS1}.
We now consider the rewritten stage equations for $\theta$ \eqref{Theta_i}.
Taking the limit $\varepsilon \rightarrow 0$ and by using \eqref{u_s} as well as the divergence-free condition for all $u^{(i)}$, gives
	\begin{align}
		\Delta_x \theta^{(i)}  &=   \frac{1}{ a_{ii}} \nabla_x \cdot \left[ \sum_{j=1}^{i-1} \tilde{a}_{ij} \left( \frac{\tau }{4} \Delta_x u^{(j)} -  (u^{(j)}\cdot \nabla_x) u^{(j)} + \nabla_x \left( \frac{|u^{(j)}|^2}{2}\right)\right) \right]
		- \sum_{j=1}^{i-1} a_{ij} \Delta_x  \theta^{(j)}.
	\end{align}
The above equation can be recast as
	\begin{align}
		a_{ii}\Delta_x \theta^{(i)}  &= \nabla_x \cdot \left[ \sum_{j=1}^{i-1} \tilde{a}_{ij} \left( \frac{\tau }{4} \Delta_x u^{(j)} -  (u^{(j)}\cdot \nabla_x) u^{(j)}\right) \right] +\sum_{j=1}^{i-1} \tilde{a}_{ij} \Delta_x \frac{|u^{(j)}|^2}{2}
		- \sum_{j=1}^{i-1} a_{ij} \Delta_x  \theta^{(j)}
	\end{align}
Now, due to the fact that a GSA IMEX RK scheme is employed, this is equivalent to
	\begin{align}
		\sum_{i=1}^{s} w_{i} \Delta_x  \theta^{(i)} = \nabla_x \cdot \left[ \sum_{i=1}^{s} \tilde{w}_{i} \left( \frac{\tau }{4} \Delta_x u^{(i)} -  (u^{(i)}\cdot \nabla_x) u^{(i)}\right) \right] +\sum_{i=1}^{s} \tilde{w}_{i} \Delta_x \frac{|u^{(i)}|^2}{2}.
	\end{align}
Thanks to \eqref{Theta_P_Prop} and $\sum_{i=1}^s w_i = \sum_{i=1}^s \tilde{w}_i$, we finally get
	\begin{align}\label{Glg2}
		\sum_{i=1}^{s} w_{i} \Delta_x  p^{(i)} = \nabla_x \cdot \left[ \sum_{i=1}^{s} \tilde{w}_{i} \left( \frac{\tau }{4} \Delta_x u^{(i)} -  (u^{(i)}\cdot \nabla_x) u^{(i)}\right) \right],
	\end{align}
which is a consistent high order Implicit-Explicit time discretization of equation \eqref{INS2}.
\end{proof}
We can now state a similar result for IMEX methods of type $CK$ or $ARS$.
\begin{theorem}
	If the IMEX method is of type CK or ARS, satisfies the GSA property and the initial velocity field is divergence free, then in the limit $\varepsilon \rightarrow 0$ the IMEX RK scheme \eqref{InternalStages} becomes a consistent discretization for the incompressible Navier Stokes equations.
\end{theorem}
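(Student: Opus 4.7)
The plan is to reduce this to Theorem~3.1 by treating the first internal stage separately and by exploiting the invertibility of the submatrix $\hat A$ thereafter. In a type CK scheme the first row of $A$ (and, for ARS, also the first column) is identically zero, so for every $\varepsilon>0$ the first stage in \eqref{InternalStages} collapses to $u^{(1)}=u^n$, $\theta^{(1)}=\theta^n$, $v^{(1)}=v^n$, $q^{(1)}=q^n$. The hypothesis $\nabla_x\cdot u^n=0$ then gives the base of the induction, $\nabla_x\cdot u^{(1)}=0$, immediately and for free, playing exactly the role that the computation \eqref{induction_start}, which relied on $a_{11}\neq 0$, played in the type~A proof.

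For the remaining stages $i=2,\dots,s$, I would note that the CK (and hence ARS) assumption forces $a_{ii}\neq 0$, since $\hat{A}$ is invertible and the scheme is DIRK. This is precisely the hypothesis used in Theorem~3.1, and under it the rewriting \eqref{V_i} of $v^{(i)}$, the elliptic stage equation \eqref{Theta_i} for $\theta^{(i)}$, and the limit identities \eqref{u_i}--\eqref{v_j_eps_0} all remain valid without change. I would then run the same induction $(i-1)\mapsto i$ to propagate the incompressibility constraint, combining \eqref{induction_step}, \eqref{sum_lapl_theta} and \eqref{v_j_eps_0} with the vector identities
\begin{align*}
\nabla_x\cdot B(\nabla_x\cdot B(u)) &= \Delta_x u,\\
\nabla_x\cdot B(F(u)) &= (u\cdot\nabla_x)u - \nabla_x\bigl(|u|^2/2\bigr) + u\,(\nabla_x\cdot u),
\end{align*}
to arrive at the stage equation \eqref{u_s} for $u^{(i)}$. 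The GSA property then closes the argument: $u^{n+1}=u^{(s)}$ and $\theta^{n+1}=\theta^{(s)}$, and the weighted combinations \eqref{Glg1}--\eqref{Glg2} exhibit $u^{n+1}$ and $p^{n+1}$ as a consistent high-order IMEX discretisation of \eqref{INS1}--\eqref{INS2}.

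The hard part is precisely the loss of $a_{11}\neq 0$: absent this, there is no stage equation at $i=1$ from which the incompressibility constraint can be recovered via the $\theta$-update, and this is exactly what forces the additional hypothesis on the initial datum. I would pay some attention to the CK case in which the first column of $A$ need not vanish, so that the implicit coupling $a_{i1}\,v^{(1)}/(\tau\varepsilon^2)$ with $i\ge 2$ appears in the stage equations for $v^{(i)}$; since $v^{(1)}=v^n$ is an already-known datum, this term is absorbed into the right-hand side and does not affect the limiting arguments. Once the initialisation $\nabla_x\cdot u^{(1)}=0$ is granted, every subsequent step is a formal re-indexing of the proof of Theorem~3.1, since the stages $i\ge 2$ have nonzero diagonal entries and the Runge--Kutta weights are insensitive to the block structure of $A$.
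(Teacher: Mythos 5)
Your proposal is correct and follows essentially the same route as the paper's Appendix A proof: the first stage reduces to the time-level-$n$ data, the divergence-free initial datum supplies the base case of the induction in place of the $a_{11}\neq 0$ computation, and the remaining stages repeat the type A argument verbatim since $a_{ii}\neq 0$ for $i\geq 2$ by invertibility of $\hat A$. Your additional remarks on the nonvanishing first column of $A$ in the CK case and on why the $a_{i1}v^{(1)}$ coupling is harmless are accurate refinements of points the paper leaves implicit, but they do not change the structure of the argument.
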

The proof follows the same line of reasoning as in Theorem 3.4 and is therefore postponed to the Appendix.
Here, we simply highlight the main difference with respect to IMEX schemes of type A, which do not require the initial condition to be well prepared with respect to the limit model in order to be effective.

In practice the algorithm for solving the lattice-Boltzmann model which employ the above discussed class of GSA IMEX RK schemes \eqref{InternalStages} reads:
\begin{algorithm} [H]\caption{An Asymptotic Preserving IMEX Runge-Kutta schemes for the low Mach limit}
	\begin{enumerate}
		\item Compute $v^{(i)}$ by solving
		\begin{align}\nonumber \label{step1}
			v^{(i)} &= \frac{\tau \varepsilon^2}{\tau \varepsilon^2 + \Delta t a_{ii}}v^n \\&- \Delta t \frac{1}{\tau \varepsilon^2 + \Delta t a_{ii}}\left[ \sum_{j=1}^{i-1} \tilde{a}_{ij} \left( \frac{\tau }{4} \nabla_x \cdot B(u^{(j)}) + \tau \varepsilon^2 \nabla_x q^{(j)} -  F(u^{(j)})\right)\right]\\ \nonumber &- \Delta t \sum_{j=1}^{i-1} a_{ij} \frac{1}{\tau \varepsilon^2 + \Delta t a_{ii}} v^{(j)}.
		\end{align}
		\item Compute $\theta^{(i)}$ by solving the following Helmholtz equation
		\begin{align}\nonumber \label{step2}
			\Delta_x \theta^{(i)} - \frac{2 \varepsilon^2}{\Delta t^2 a_{ii}^2} \theta^{(i)} &= - \frac{2 \varepsilon^2}{\Delta t^2 a_{ii}^2} \theta^n + \frac{1}{\Delta t a_{ii}^2}\left[\sum_{j=1}^{i-1} a_{ij} \nabla_x \cdot u^{(j)} \right] \\&+ \frac{1}{\Delta t a_{ii}} \nabla_x \cdot u^{n} - \frac{1}{ a_{ii}} \sum_{j=1}^{i} a_{ij} \nabla_x \cdot \nabla_x \cdot B(v^{(j)}) + \frac{1}{ a_{ii}} \sum_{j=1}^{i-1} a_{ij} \Delta_x  \theta^{(j)}.
		\end{align}
		\item Compute $u^{(i)}$ by solving
		\begin{align}\label{step3}
			u^{(i)} &= u^n - \Delta t \left[\sum_{j=1}^i a_{ij} \nabla_x \cdot B(v^{(j)}) + \sum_{j=1}^i a_{ij} \nabla_x  \theta^{(j)} \right].
		\end{align}
		\item Compute $q^{(i)}$ by solving
		\begin{align} \label{step4}
			q^{(i)} &= \frac{\tau \varepsilon^2}{\tau \varepsilon^2 + \Delta t a_{ii}}q^n - \Delta t \frac{1}{\tau \varepsilon^2 + \Delta t a_{ii}}\left[ \sum_{j=1}^i a_{ij} \frac{\tau}{2} \nabla_x \cdot v^{(j)} + \sum_{j=1}^{i-1} a_{ij}  q^{(j)}\right] 
		\end{align}
	\end{enumerate}
\label{alg1}
\end{algorithm}
%


\subsection{Space Discretization} \label{sec:sd}
We introduce a mesh in the two-dimensional physical space with spacing $\Delta x$ and $\Delta y$, and define all variables at the cell centers of the resulting grid. To simplify the presentation and clearly illustrate the basic principle of our spatial discretization, we first describe a first-order finite difference scheme based on Algorithm~\ref{alg1}. The approach for the space discretization follows the methodology proposed in \cite{BOSCARINO}.
\begin{itemize}
	\item The spatial discretization of the first order derivatives $\nabla_x q$, $\nabla_x \cdot B(v)$ in \eqref{step1} and \eqref{step3} of Algorithm~\ref{alg1} are treated by local Lax Friedrichs flux functions. Thus, for $\nabla_x q$, we get 
	\begin{align}
		D_{LF}\ q = \left(\frac{1}{\Delta x} (\hat{q}_{i+1/2,j} - \hat{q}_{i-1/2,j}) , \frac{1}{\Delta y} (\hat{q}_{i,j+1/2} - \hat{q}_{i,j-1/2})\right)
	\end{align}
	with
	\begin{align}\nonumber
		\hat{q}_{i+1/2,j} = \frac{1}{2} \left( q_{i,j} + q_{i+1,j} - \alpha (v_{i+1,j} - v_{i,j})\right), 
		\hat{q}_{i,j+1/2} = \frac{1}{2} \left( q_{i,j} + q_{i,j+1} - \alpha (v_{i,j+1} - v_{i,j})\right)
	\end{align}
	with $\alpha = 1$. The same approach defines the space discretization for $\nabla_x \cdot B(v)$.
	\item The spatial discretization of $\nabla_x \cdot B(u)$ and $\nabla_x \cdot v$ in \eqref{step1} and \eqref{step4} of Algorithm 1 are treated by local Lax Friedrichs flux functions without numerical diffusion:
	\begin{align}
		D_{LF}^0 \cdot v = \frac{1}{\Delta x} (\hat{v}_{i+1/2,j} - \hat{v}_{i-1/2,j}) + \frac{1}{\Delta y} (\hat{v}_{i,j+1/2} - \hat{v}_{i,j-1/2})
	\end{align}
	with
	\begin{align}\nonumber
		\hat{v}_{i+1/2,j} = \frac{1}{2} \left( v_{i,j} + v_{i+1,j} - \alpha_0 (q_{i+1,j} - q_{i,j})\right), 
		\hat{v}_{i,j+1/2} = \frac{1}{2} \left( v_{i,j} + v_{i,j+1} - \alpha_0 (q_{i,j+1} - q_{i,j})\right)
	\end{align}
	and $\alpha_0 = 0$.  This choice reflects the fact that the presence of numerical diffusion would lead to inaccurate solutions for small values of the scaling parameter $\varepsilon$. The main reason is that such terms are of order $O(\varepsilon^{-2})$, and therefore the effect of numerical diffusion would also scale as $O(\varepsilon^{-2})$.
	
	\item The divergence of $u$  in the Helmholtz equation \eqref{step2} is discretized as 
	\begin{align}
		D_0 \cdot u = \frac{1}{2 \Delta x} (u_{i+1,j}- u_{i-1,j}) +\frac{1}{2 \Delta y} (u_{i,j+1}- u_{i,j-1}).
	\end{align}
	The second-order derivative $\nabla_x \cdot \nabla_x \cdot B(v)$ in \eqref{step2} is discretized in the following way
	\begin{align}
		D_0^2 : B(v) &= -D_{xx} v_1+ 2D_{xy} v_2 + D_{yy} v_1
		= -\frac{1}{\Delta x^2} \left( {v_1}_{i+1,j} - 2{v_1}_{i,j}+ {v_1}_{i-1,j}\right) \nonumber
		\\ &+ \frac{1}{2 \Delta x \Delta y}\left( \left( {v_2}_{i+1,j+1} - {v_2}_{i-1,j+1}\right) - \left( {v_2}_{i+1,j-1} - {v_2}_{i-1,j-1}\right)\right) \nonumber
		\\ &+ \frac{1}{\Delta y^2} \left( {v_1}_{i,j+1} - 2{v_1}_{i,j}+ {v_1}_{i,j-1}\right)
	\end{align}
	while the Laplacian operator $\Delta_x \theta$ is given by
	\begin{align}
		D_{\Delta} \theta = \frac{{\theta}_{i+1,j} - 2{\theta}_{i,j}+ {\theta}_{i-1,j}}{\Delta x^2} + 
		\frac{{\theta}_{i,j+1} - 2{\theta}_{i,j}+ {\theta}_{i,j-1}}{\Delta y^2}.
	\end{align}
	\item Finally, the spatial discretization of $\nabla_x \theta$ in \eqref{step3} is also given by a central difference approximation $D_0 \ \theta$
		\begin{align}
		D_{0}\ \theta = \left(\frac{1}{2\Delta x} (\theta_{i+1,j} - \theta_{i-1,j}) , \frac{1}{2\Delta y} (\theta_{i,j+1} - \theta_{i,j-1})\right)
	\end{align}
	
\end{itemize}
In summary, the fully discretized family of numerical schemes, based on the IMEX–RK approach and on the finite–difference approximations introduced above, can be written as follows:
\begin{align}\nonumber
	&v^{(i)} = \frac{\tau \varepsilon^2}{\tau \varepsilon^2 + \Delta t a_{ii}}v^n -  \frac{\Delta t}{\tau \varepsilon^2 + \Delta t a_{ii}}\left[ \sum_{j=1}^{i-1} \tilde{a}_{ij} \left( \frac{\tau }{4} D_{LF}^0 \cdot B(u^{(j)}) + \tau \varepsilon^2 D_{LF} \ q^{(j)} -  F(u^{(j)})\right)\right]\\ &- \frac{\Delta t}{\tau \varepsilon^2 + \Delta t a_{ii}} \sum_{j=1}^{i-1} a_{ij}  v^{(j)} \\ \nonumber
	&D_{\Delta} \theta^{(i)} - \frac{2 \varepsilon^2}{\Delta t^2 a_{ii}^2} \theta^{(i)} = - \frac{2 \varepsilon^2}{\Delta t^2 a_{ii}^2} \theta^n + \frac{1}{\Delta t a_{ii}^2}\left[\sum_{j=1}^{i-1} a_{ij} D_0 \cdot u^{(j)} \right] \\&+ \frac{1}{\Delta t a_{ii}} D_0 \cdot u^{n} - \frac{1}{ a_{ii}} \sum_{j=1}^{i} a_{ij} D_0^2: B(v^{(j)}) + \frac{1}{ a_{ii}} \sum_{j=1}^{i-1} a_{ij} D_{\Delta}  \theta^{(j)} \\
	&u^{(i)} = u^n - \Delta t \left[\sum_{j=1}^i a_{ij} D_{LF} \cdot B(v^{(j)}) + \sum_{j=1}^i a_{ij} D_0\  \theta^{(j)} \right] \\
	&q^{(i)} = \frac{\tau \varepsilon^2}{\tau \varepsilon^2 + \Delta t a_{ii}}q^n -  \frac{\Delta t}{\tau \varepsilon^2 + \Delta t a_{ii}}\left[ \sum_{j=1}^i a_{ij} \frac{\tau}{2} D_{LF}^0 \cdot v^{(j)} + \sum_{j=1}^{i-1} a_{ij}  q^{(j)}\right] 
\end{align}
The extension to high–order space discretization is achieved by replacing all local Lax–Friedrichs flux functions with high order shock capturing schemes based on WENO reconstruction \cite{Shu}, while all central difference approximations are modified accordingly to attain fourth order accuracy.
We omit the details for simplicity.

\section{Numerical tests}\label{sec4}
In this section, we present a set of numerical experiments designed to assess the performance of the proposed schemes. The tests focus on classical benchmark problems for incompressible flows in two dimensions and aim to verify accuracy, stability, and robustness across different regimes.

\subsection{Thick shear layer test case in 2D}
We first consider the following periodic two-dimensional model problem. Let $(x,y) \in \left[ 0,2 \pi\right]^2$ and let the initial flow to consist of a horizontal shear-layer of finite thickness, perturbed by a small amplitude vertical velocity of the form 
\begin{align}\label{InitialVelocity}
	u_1(x,y,0) = \begin{cases}\tanh(\frac{1}{\rho}(y-\pi/2)), & y \leq \pi \\ \tanh(\frac{1}{\rho}(3\pi/2 - y)), & y> \pi \end{cases}
\end{align}
and
\begin{align}
	u_2(x,y,0) = \delta \sin(x)
\end{align}
with the perturbation parameter $\delta = 0.05$ and the "thick" shear-layer width parameter $\rho = \frac{\pi}{15}$, see \cite{Bell} for details. A convergence study, using the $L_1$ and $L_\infty$ norms, for the vorticity variable is shown in Table 1 and Table 2 for the following first and second order IMEX-RK methods 
\begin{itemize}
	\item First order GSA Euler IMEX scheme
	\begin{align}
			\text{Explicit:  } \begin{array}{c|c}
					\begin{array}{c}
							0 \\  1
						\end{array} &  
					\begin{array}{cc}
							0 & 0  \\ 1 & 0  
						\end{array} \\ \hline   & \begin{array}{cc}
							1 &  0
						\end{array}
				\end{array} \qquad \text{Implicit:  } \begin{array}{c|c}
					\begin{array}{c}
							0  \\ 1
						\end{array} &  
					\begin{array}{cc}
							0 & 0 \\ 0 & 1  
						\end{array} \\ \hline   & \begin{array}{cc}
							0 & 1 
						\end{array}
				\end{array} 
		\end{align}
	
	\item Second order GSA IMEX RK scheme
	\begin{align}
			\text{Explicit:  } \begin{array}{c|c}
					\begin{array}{c}
							0 \\ c \\ 1
						\end{array} &  
					\begin{array}{ccc}
							0 & 0 & 0 \\ c & 0 & 0 \\ 1-1/(2c) & 1/(2c) & 0
						\end{array} \\ \hline   & \begin{array}{ccc}
							1-1/(2c) & 1/(2c) & 0
						\end{array}
				\end{array} \qquad \text{Implicit:  } \begin{array}{c|c}
					\begin{array}{c}
							0 \\ c \\ 1
						\end{array} &  
					\begin{array}{ccc}
							0 & 0 & 0 \\ 0 & c  & 0 \\ 0 & 1-\gamma & \gamma
						\end{array} \\ \hline   & \begin{array}{ccc}
							0 & 1-\gamma & \gamma
						\end{array}
				\end{array} 
		\end{align}
	with $\gamma = (c-1/2)/(c-1)$ and $c = 2.25$, cf. \cite{Bosc},
\end{itemize}
together with a third order WENO reconstruction for the space discretization. The reference solution is obtained from the numerical approximation computed on a fine mesh of $513 \times 513$ grid points. The results reported in Table 1 and Table 2 confirm that the two methods achieve the expected order of convergence. In Figure~\ref{fig:1}, we investigate the uniform convergence for different values of $\varepsilon$ and two distinct values of the diffusion constant $\tau$, namely $\tau = 0$ on the left and $\tau = 0.05$ on the right. These results also demonstrate uniform accuracy across varying values of the scaling parameter, provided that the other discretization parameters are fixed. Figure~\ref{fig:2} displays the vorticity profiles, which are consistent with those produced by other high-order schemes for the incompressible Euler equations, see for example~\cite{Bell,Klar99, BOSCARINO} and the references therein. The right panel of the same figure reports the time evolution of the $L^\infty$ norm of the divergence ${u_1}_x + {u_2}_y$. We observe that the numerical divergence of the velocity grows over time, generating increasingly fine-scale structures in the solution. This effect is due to the choice of the space discretization which do not guarantee at the numerical level the preservation fo the divergence-free condition and it is also also reported and documented in~\cite{BOSCARINO}.
In Figure~\ref{fig:3}, we again observe that the proposed schemes produce vorticity fields comparable to those obtained with other high-order methods for the incompressible Navier–Stokes equations when $\tau > 0$ (see \cite{Bell, BOSCARINO, Klar99}). In this case, the divergence of the velocity grows more slowly than in the incompressible Euler setting, owing to the presence of smoother vortical structures. This spurious effect is due, as already stated, to the choice of the space discretization and we plan to address it in future investigations.
Figure~\ref{fig:3_2} shows that that using larger values of $\varepsilon$ results in smearing out the vortical structures, as expected.
\begin{table}[H]
	\centering
	\begin{tabular}{ |c|c|c|c|c|c|c| } 
		\hline
		N & $L_1$ error  & $L_1$ order & $L_2$ error & $L_2$ order & $L_\infty$ error & $L_\infty$ order\\
		\hline \hline
		33 & 0.0183 & -- & 0.0043 & -- & 0.0021 & --\\ 
		65 & 0.0123 & 0.5734 & 0.0031 & 0.4522 & 0.0016 & 0.4105  \\ 
		129  & 0.0069 & 0.8310 & 0.0019 & 0.6964 & 0.0010 & 0.6400 \\
		257  & 0.0028 & 1.2867 & 8.423e-04 & 1.1904 & 4.599e-04 &  1.1212 \\
		
		\hline
	\end{tabular}
	\caption{ $L_1$, $L_2$ and $L_\infty$ norms of the error and corresponding orders of convergence for the vorticity variable for the thick double shear layer test case with $\varepsilon = 10^{-6}$ and $\tau = 0$ at time $t =1$ for the first order IMEX method.  }
\end{table}

\begin{table}[H]
	\centering
	\begin{tabular}{ |c|c|c|c|c|c|c| } 
		\hline
		N & $L_1$ error  & $L_1$ order & $L_2$ error & $L_2$ order & $L_\infty$ error & $L_\infty$ order\\
		\hline \hline
		33 & 0.0111 & -- & 0.0039 & -- & 0.0019 & --\\ 
		65 & 0.0039 & 1.5141 & 0.0012 & 1.4447 & 7.4941e-04 & 1.3086  \\ 129  & 9.0288e-04 & 2.1069 & 2.7192e-04 & 2.1036 & 1.7520e-04 & 2.0967 \\
		257  & 2.2279e-04 & 2.0189 & 7.0318e-05 & 1.9512 & 4.0079e-05 &  2.1281\\
		
		\hline
	\end{tabular}
	\caption{$L_1$, $L_2$ and $L_\infty$ norms of the error and corresponding orders of convergence for the vorticity variable for the thick double shear layer test case with $\varepsilon = 10^{-6}$ and $\tau = 0$ at time $t =1$ for the second order IMEX method.  }
\end{table}

\begin{figure}[H]
	\centering
	\begin{subfigure}{0.45\textwidth}
		\centering
		\includegraphics[width=\linewidth]{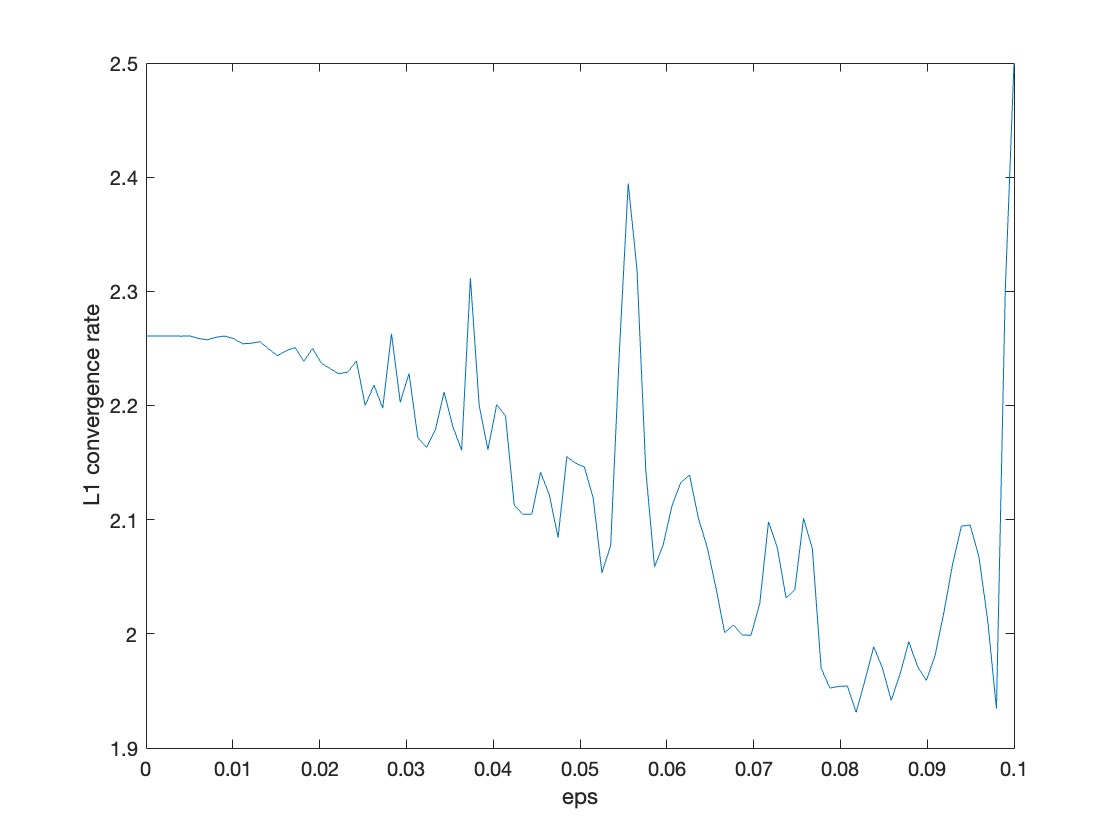}
	\end{subfigure}
	\centering
	\begin{subfigure}{0.45\textwidth}
		\centering
		\includegraphics[width=\linewidth]{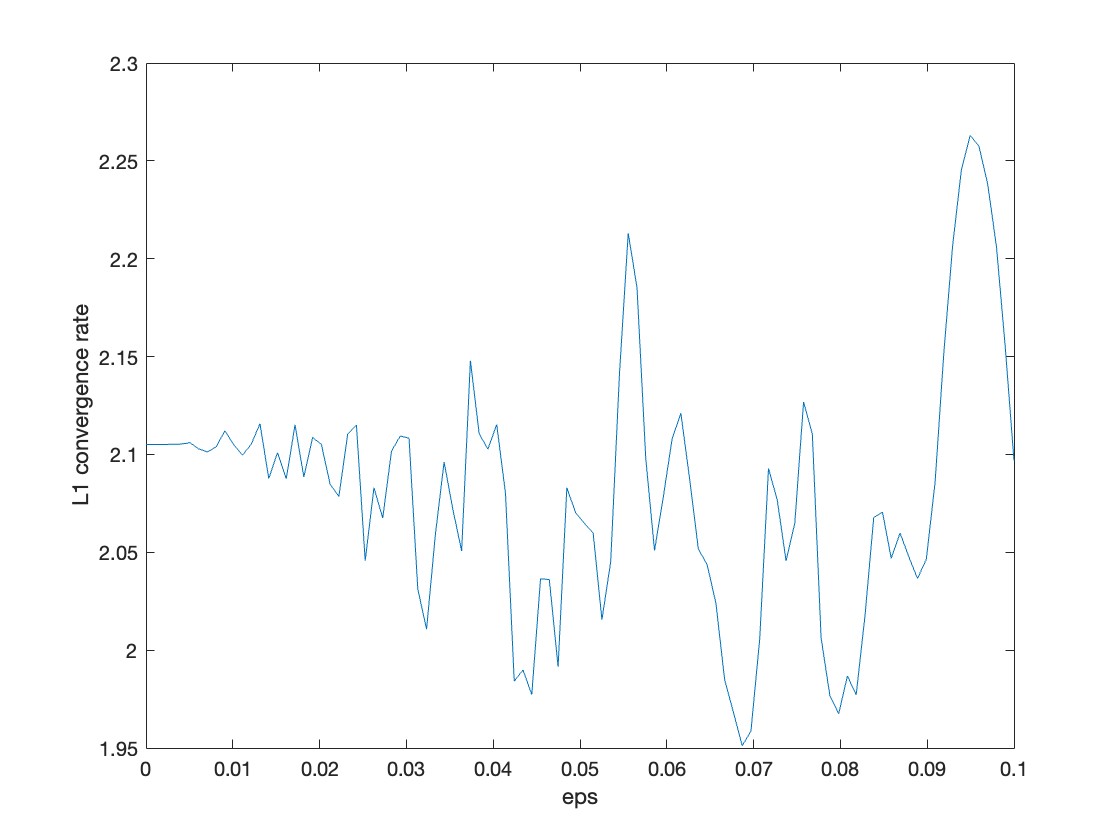}
	\end{subfigure}
	\caption{Convergence rate for the second order IMEX-RK method in time and third order WENO reconstruction in space for different values of $\varepsilon$. (left panel: $\tau = 0$, right panel: $\tau = 0.05$)}
	\label{fig:1}
\end{figure}

\begin{figure}[h]
	\centering
	\begin{subfigure}{0.4\textwidth}
		\centering
		\includegraphics[width=\linewidth]{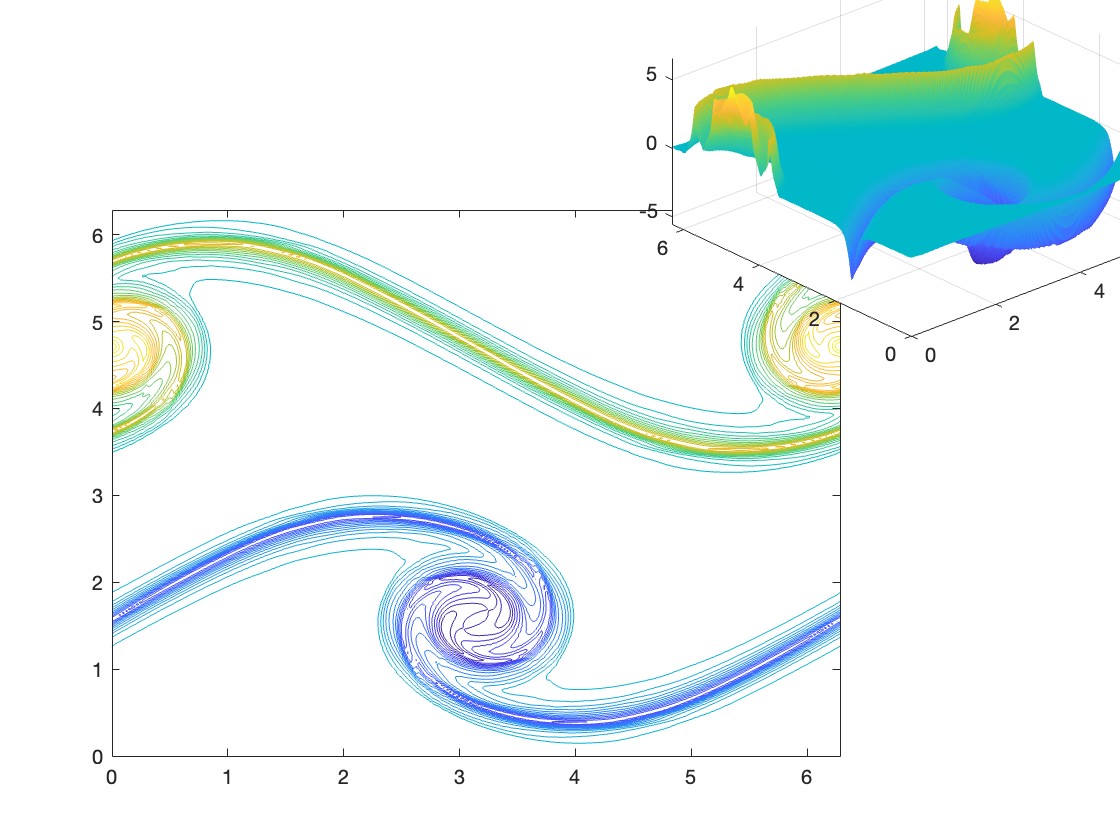}
	\end{subfigure}
	\begin{subfigure}{0.4\textwidth}
		\centering
		\includegraphics[width=\linewidth]{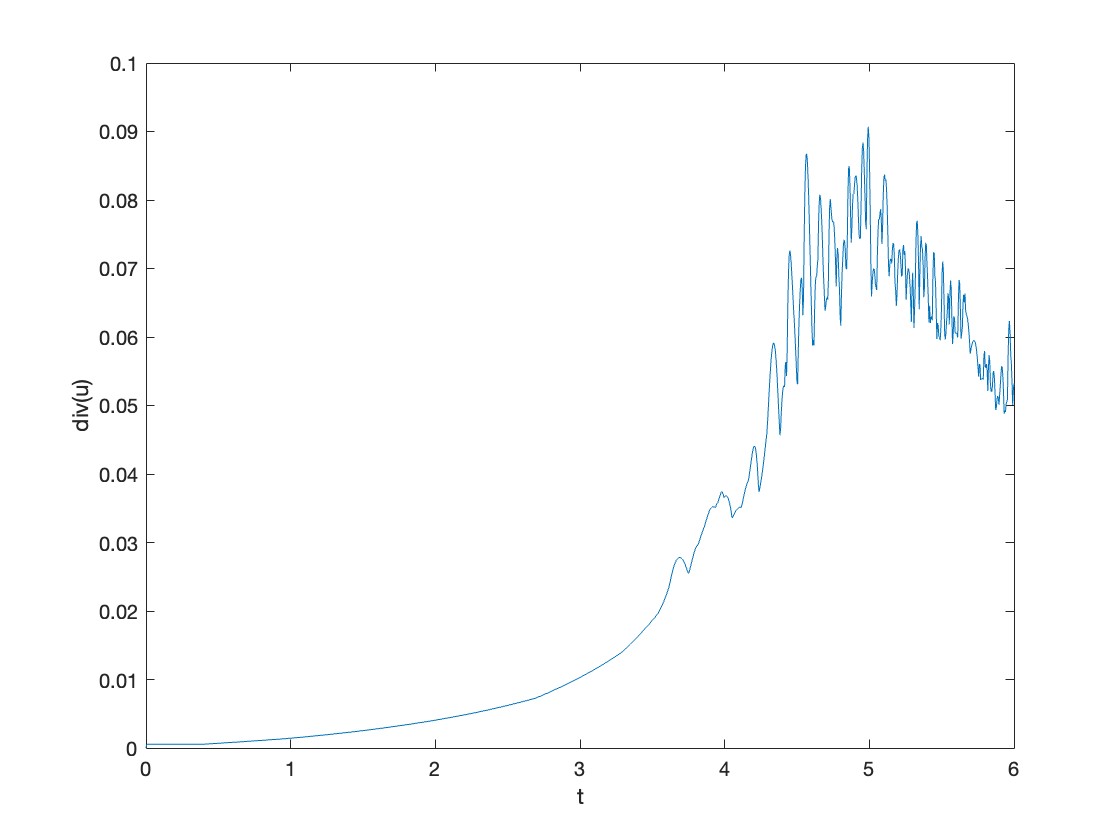}
	\end{subfigure}
	\caption{Thick shear layer test case in 2D with $\tau = 0$ (Incompressible Euler case) and $\varepsilon = 10^{-6}$. The vorticity (left panel) and the time history of the $L^\infty$ norm of the divergence (right panel) are shown at $t = 6$.}
	\label{fig:2}
\end{figure}

\begin{figure}[h]
	\centering
	\begin{subfigure}{0.4\textwidth}
		\centering
		\includegraphics[width=\linewidth]{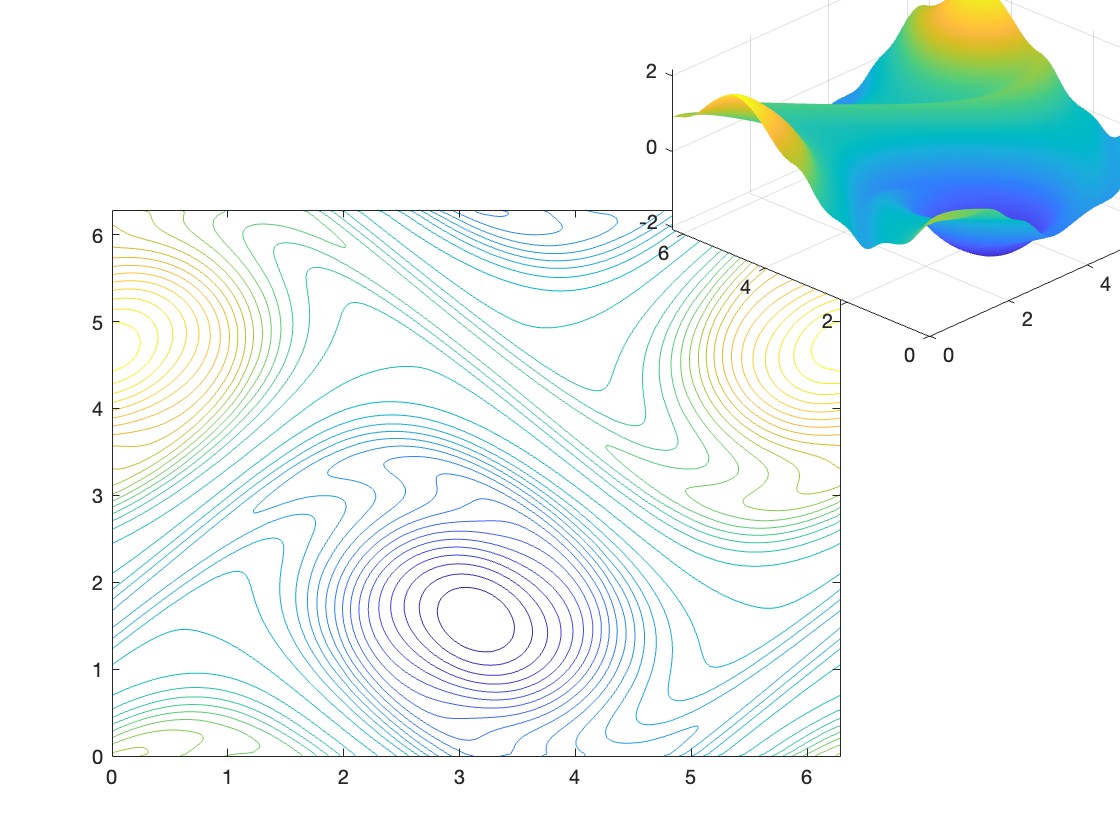}
	\end{subfigure}
	\begin{subfigure}{0.4\textwidth}
		\centering
		\includegraphics[width=\linewidth]{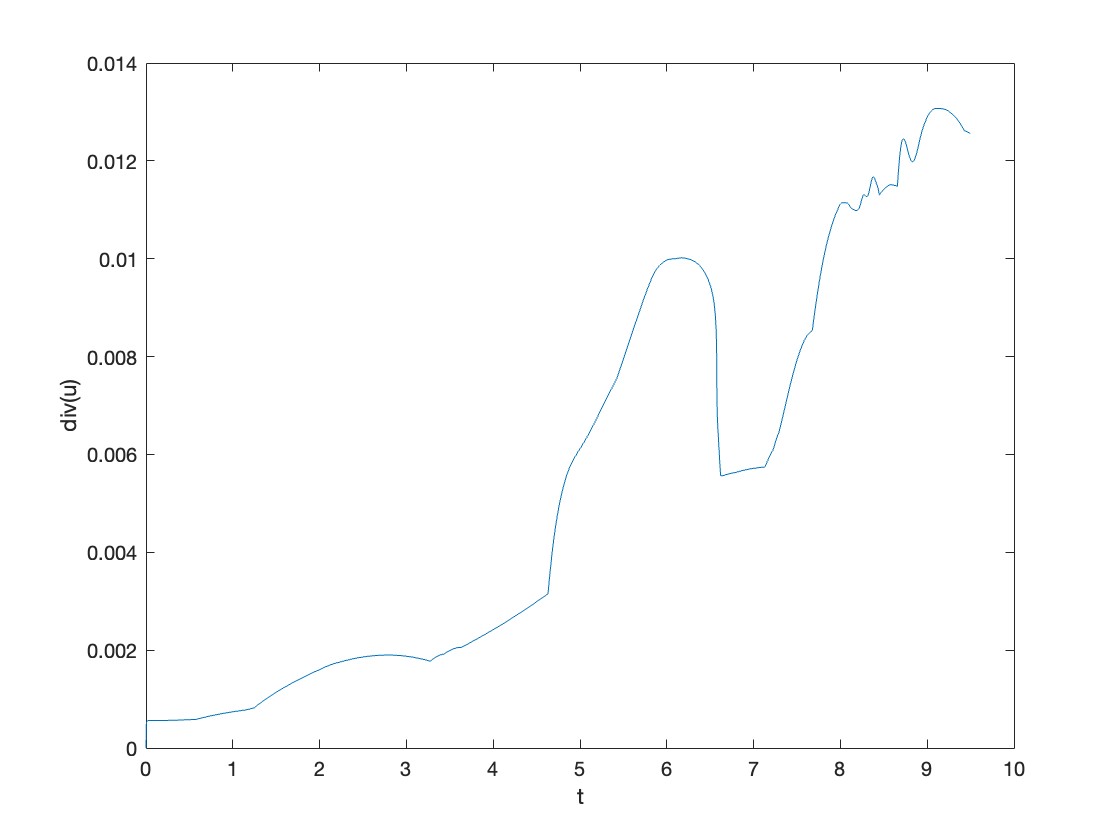}
	\end{subfigure}
		\caption{Thick shear layer test case in 2D with $\tau = 0.05$ (Incompressible Navier-Stokes case) and $\varepsilon = 10^{-6}$. The vorticity (left panel) and the time history of the $L^\infty$ norm of the divergence (right panel) are shown at $t = 9.5$.}
	\label{fig:3}
\end{figure}

\begin{figure}[h]
	\centering
	\begin{subfigure}{0.4\textwidth}
		\centering
		\includegraphics[width=\linewidth]{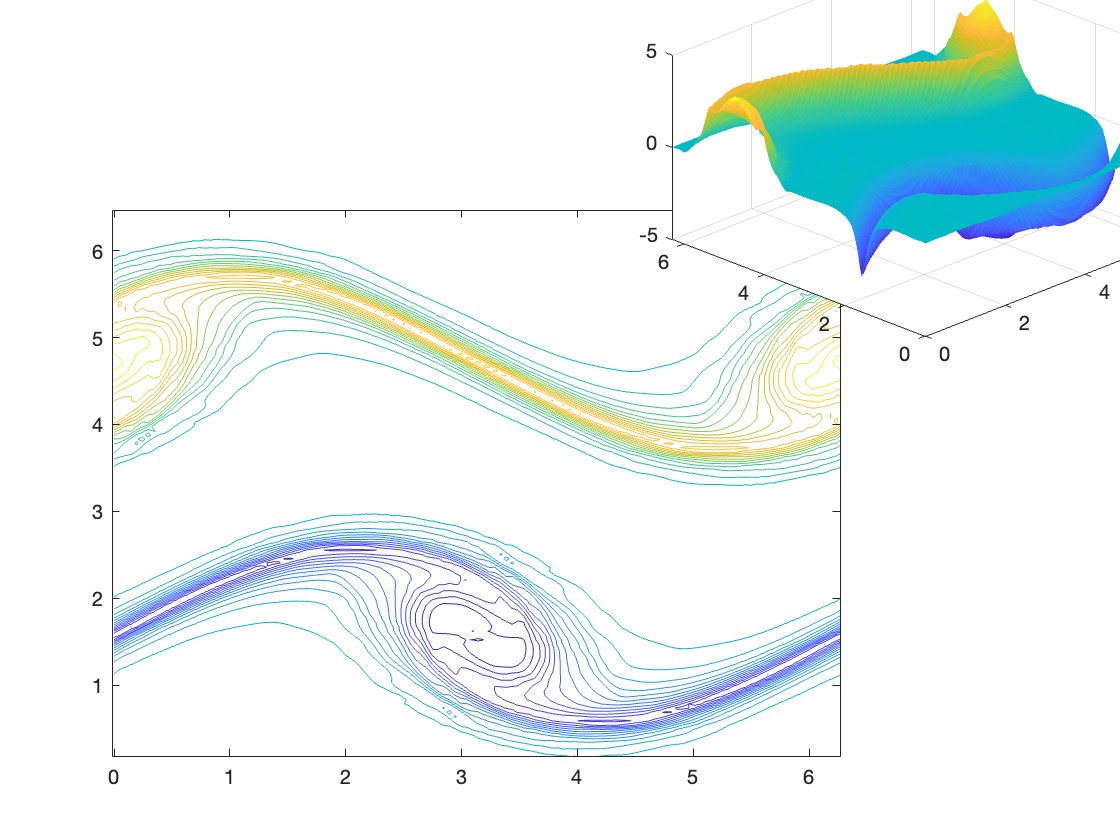}
	\end{subfigure}
	\begin{subfigure}{0.4\textwidth}
		\centering
		\includegraphics[width=\linewidth]{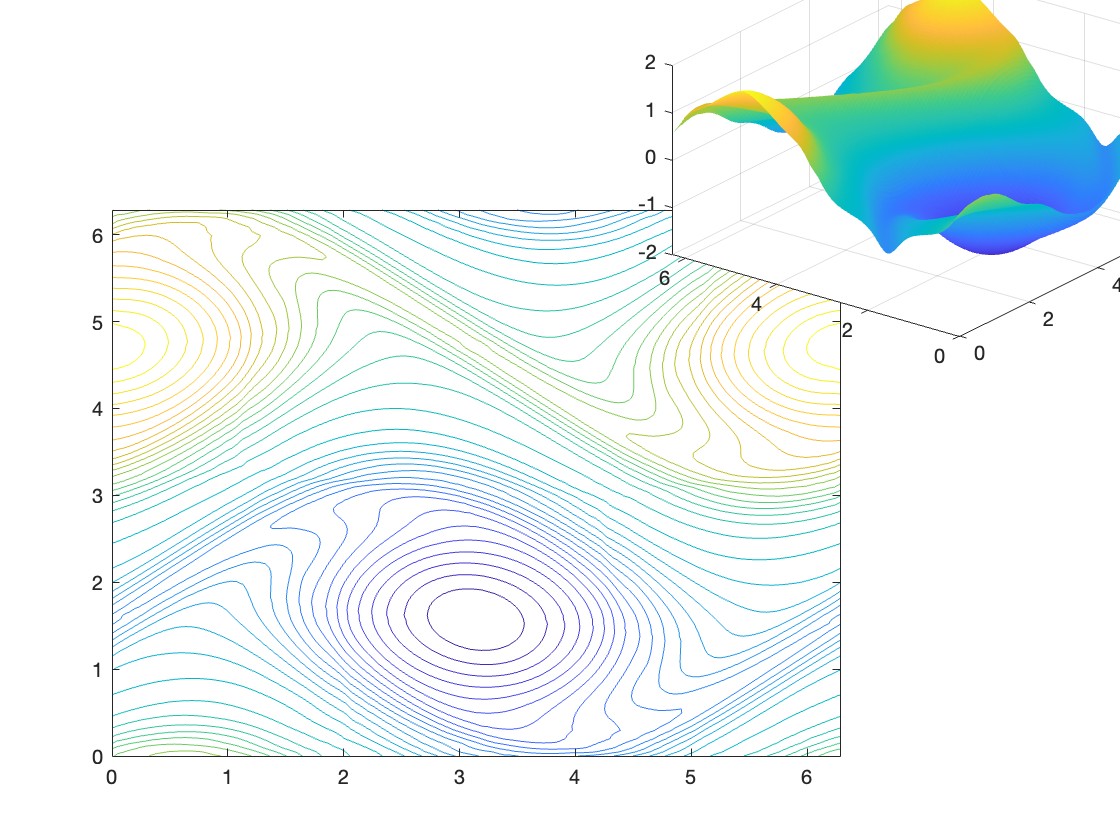}
	\end{subfigure}
		\caption{Thick shear layer test case in 2D with $\varepsilon = 0.25$. The incompressible Euler case with $\tau = 0$ (left panel) is shown at $t = 6$ and the INS case with $\tau = 0.05$ (right panel) is shown at $t = 9.5$.}
	\label{fig:3_2}
\end{figure}

\subsection{Thin shear layer test case in 2D}
For the thin shear layer test case \cite{Bell,Tavelli}, we adopt the same initial data 
$(u(x,y,0))$ as in the thick shear layer set-up, with the only difference that the shear-layer width parameter is set to $\rho = \tfrac{\pi}{50}$. We present the results of the proposed method for $\varepsilon = 10^{-6}$ in Figures \ref{fig:4} and \ref{fig:5}, in terms of vorticity.
More specifically, Figure \ref{fig:4} shows the incompressible Euler case, while Figure \ref{fig:5} corresponds to the incompressible Navier–Stokes case.
Again, the results are in line with those obtained by high-order methods for the limit model \cite{Tavelli}.
As in the thick-layer case, the $L^\infty$-norm of the velocity divergence increases along the vortices due to the choice of the spatial discretization, while the scheme still captures increasingly fine-scale structures.
\begin{figure}[h]
	\centering
	\begin{subfigure}{0.4\textwidth}
		\centering
		\includegraphics[width=\linewidth]{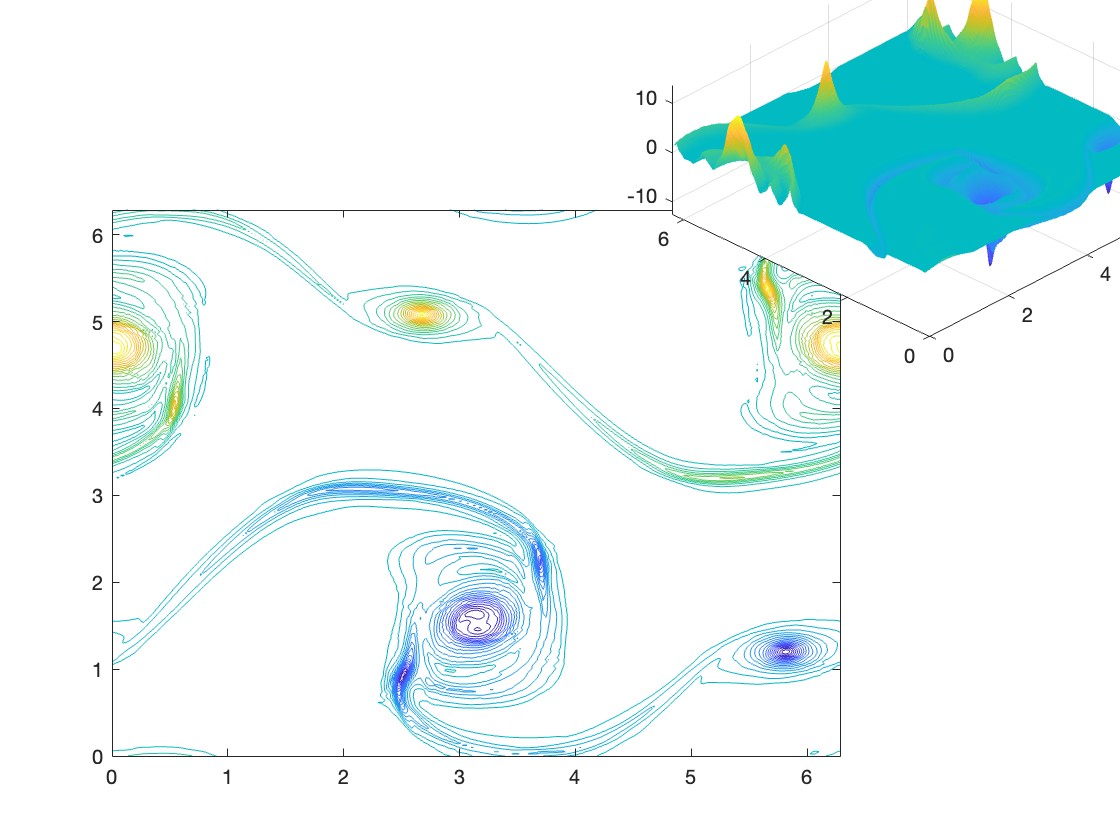}
	\end{subfigure}
	\begin{subfigure}{0.4\textwidth}
		\centering
		\includegraphics[width=\linewidth]{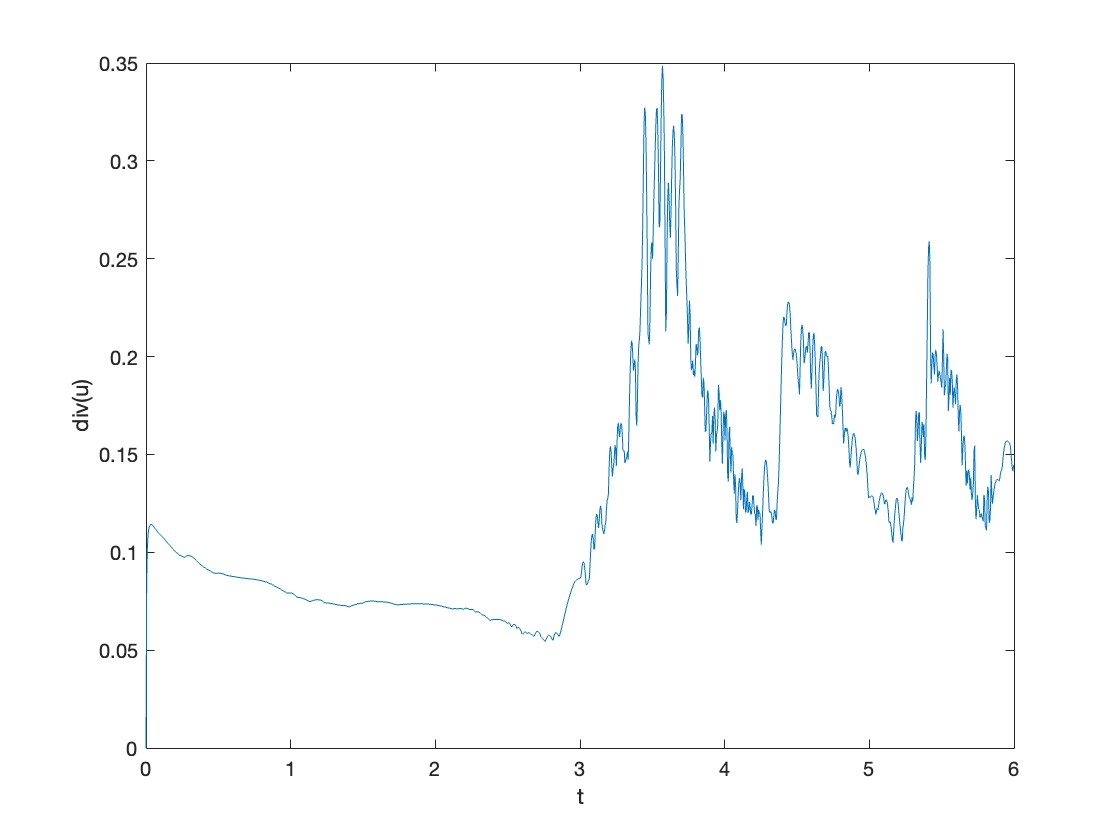}
	\end{subfigure}
	\caption{Thin shear layer test case in 2D with $\tau = 0$ (Incompressible Euler case) and $\varepsilon = 10^{-6}$. The vorticity (left panel) and the time history of the $L^\infty$ norm of the divergence (right panel) are shown at $t = 6$.}
	\label{fig:4}
\end{figure}

\subsection{Kevin Helmholtz instability}
Finally, as a last test, we consider the Kevin Helmholtz instability problem \cite{Bell,Klar99} on $\left[ 0,4 \pi\right]^2$ with initial velocity components
\begin{align}
	u_1(x,y,0) = \cos(y)
\end{align}
and
\begin{align}
	u_2(x,y,0) = 0.03 \sin(0.5x).
\end{align}
We compute the numerical solution for $\tau = 0$, namely the incompressible Euler limit, on a $256 \times 256)$ mesh grid up to time $t = 45$. The vorticity field and the corresponding $L^{\infty}$-norm for the divergence of the velocity are shown in Figure \ref{fig:6}. The results are once again in good agreement with those reported in the literature showing that the proposed method is able to keep the structure of the solution for long times. However, we still observe that the velocity divergence slightly increases over time, as finer and finer vorticity structures develop.

\begin{figure}[h]
	\centering
	\begin{subfigure}{0.4\textwidth}
		\centering
		\includegraphics[width=\linewidth]{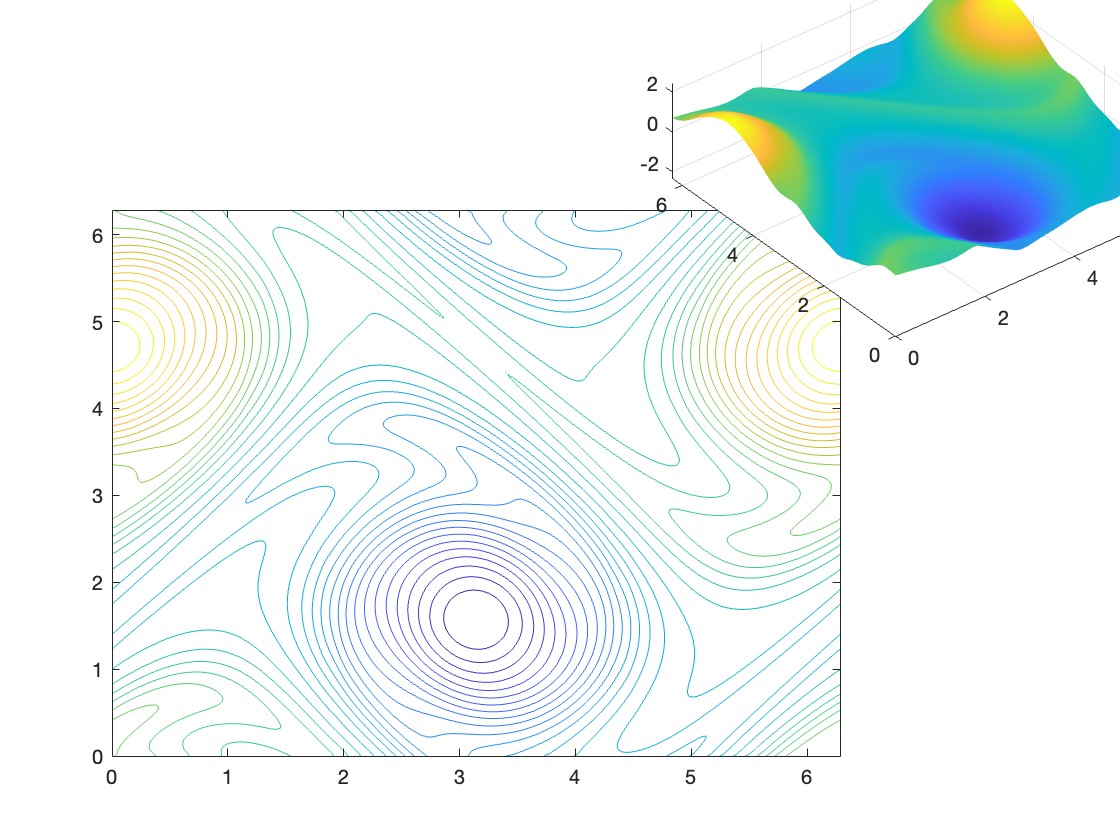}
	\end{subfigure}
	\begin{subfigure}{0.4\textwidth}
		\centering
		\includegraphics[width=\linewidth]{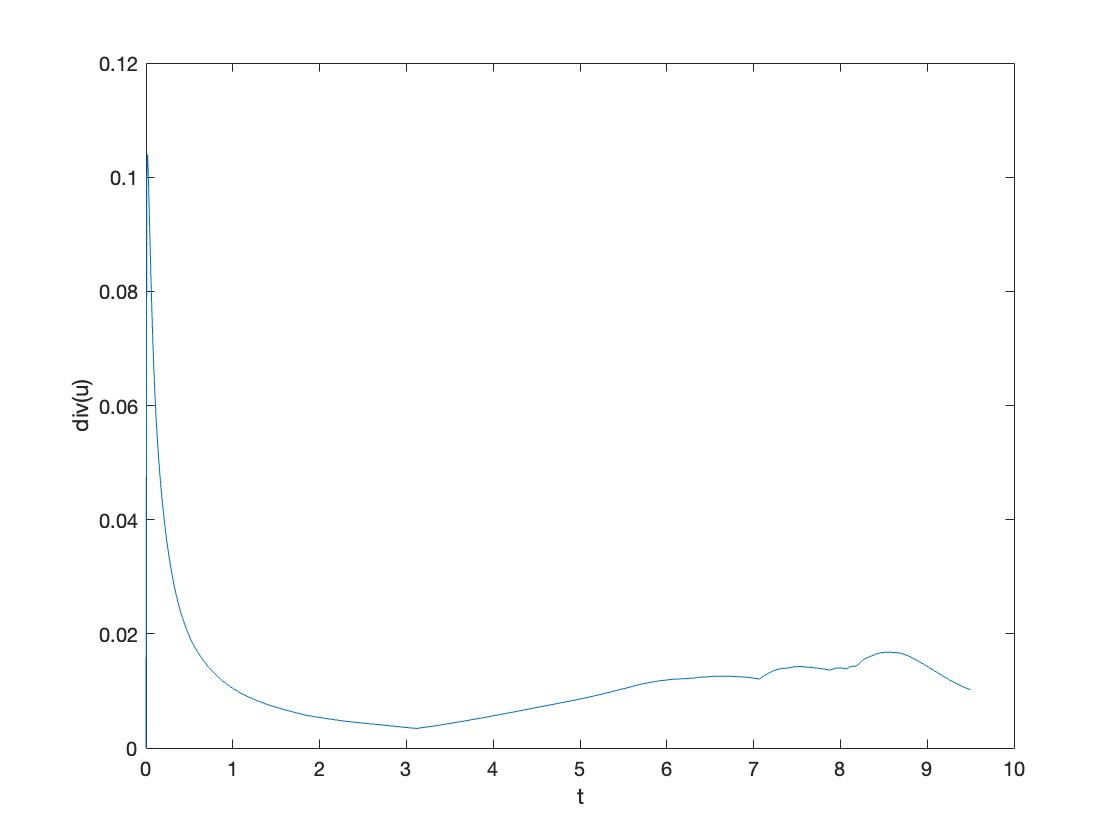}
	\end{subfigure}
	\caption{Thin shear layer test case in 2D with $\tau = 0.05$ (Incompressible Navier-Stokes case) and $\varepsilon = 10^{-6}$. The vorticity (left panel) and the time history of the $L^\infty$ norm of the divergence (right panel) are shown at $t = 9.5$.}
	\label{fig:5}
\end{figure}

\begin{figure}[h]
	\centering
	\begin{subfigure}{0.4\textwidth}
		\centering
		\includegraphics[width=\linewidth]{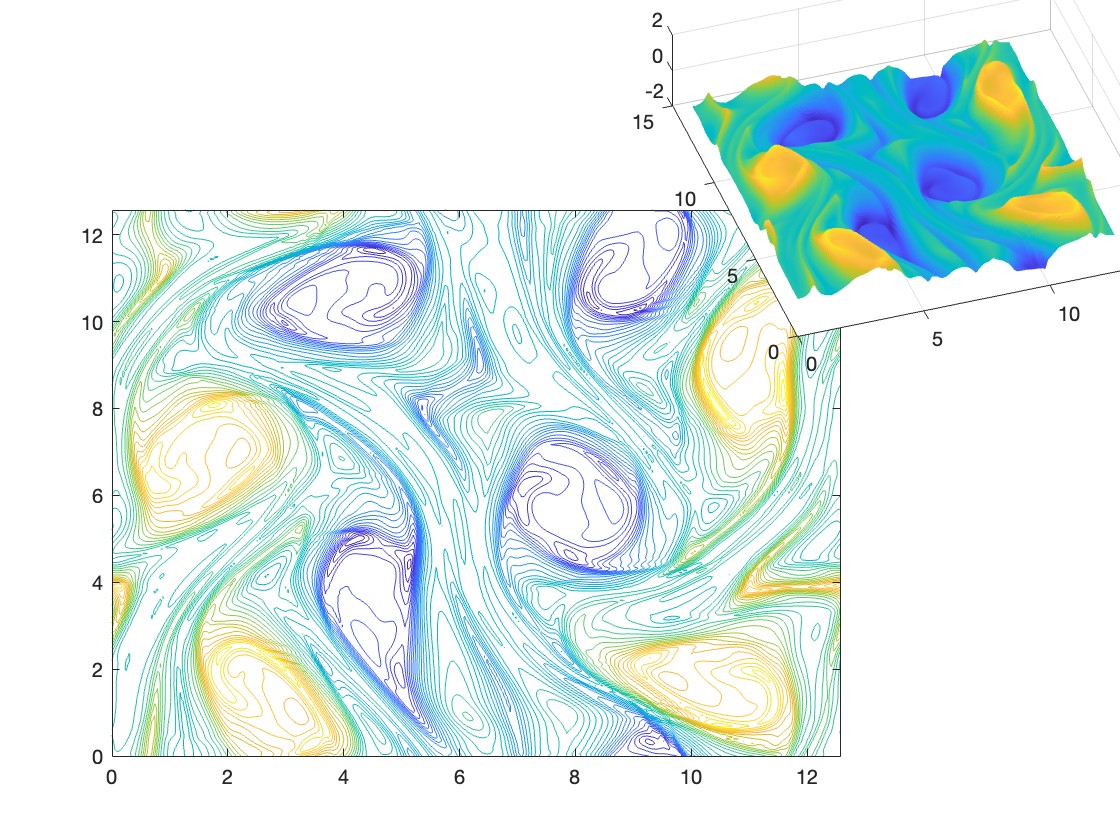}
	\end{subfigure}
	\begin{subfigure}{0.4\textwidth}
		\centering
		\includegraphics[width=\linewidth]{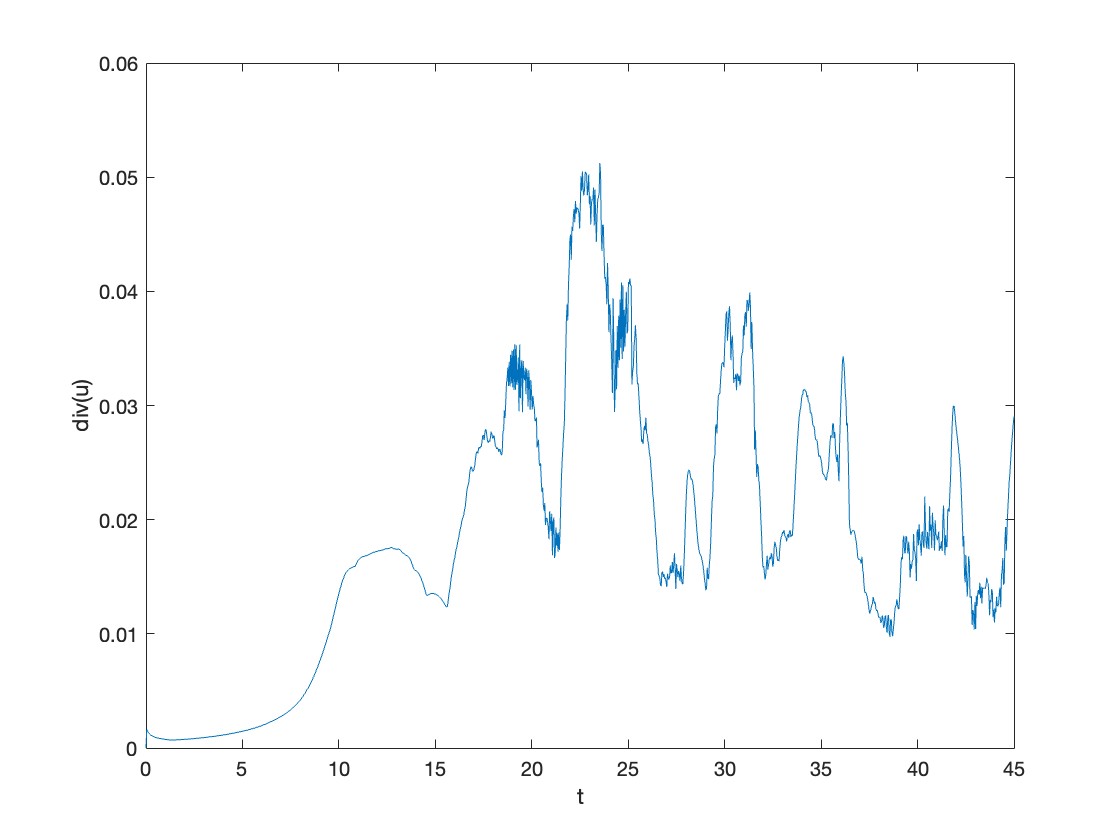}
	\end{subfigure}
	\caption{Kevin Helmholtz instability with $\tau = 0.05$ and $\varepsilon = 10^{-6}$.(Incompressible Navier-Stokes case). The vorticity (left panel) and the time history of the $L^\infty$ norm of the divergence (right panel) are shown at $t = 45$.}
	\label{fig:6}
\end{figure}

\section{Conclusions}\label{sec5}


In this paper, we have considered a lattice Boltzmann–type model with six discrete velocities under the low-Mach scaling and introduced a class of numerical schemes that remain uniformly valid with respect to the scaling parameter, up to the incompressible Euler or Navier–Stokes limit. The proposed schemes combine IMEX Runge–Kutta time discretizations with high-order non-oscillatory spatial discretizations within a finite-difference framework. In particular, WENO reconstructions are employed together with high-order central schemes. In the asymptotic limit $\varepsilon = 0$, the schemes are shown to be consistent with a high-order projection method for the incompressible Euler and Navier–Stokes equations, without requiring any coupling between the time step and the scaling parameter. Numerical experiments confirm both the effectiveness and the accuracy of the proposed method. A relevant direction for future research will be the extension of this approach to physically more relevant systems such as the Boltzmann equation. One important additional direction of research will consist in the design of spatial discretizations that, analogously to the temporal ones, ensure the exact preservation of the divergence-free condition $\nabla \cdot u = 0$ at all times.

\bibliographystyle{elsarticle-num}

\bibliography{reference_last}

\section*{Appendix A}
In this Appendix, we report the details of the proof for Theorem 3.5 related to IMEX CK or ARS schemes. 
\begin{proof}
	Due to the structure of the IMEX method of type CK or ARS, the first internal stages are equal to the variables at timestep n:
	\begin{align}
	u^{(1)} = u^n, \quad v^{(1)} = v^n, \quad \theta^{(1)} = \theta^n,\quad q^{(1)} = q^n.
	\end{align}
	By rewriting the equation of the internal stages $v^{(i)}$, we obtain
	\begin{align}
		v^{(i)} &= \frac{\tau \varepsilon^2}{\tau \varepsilon^2 + \Delta t a_{ii}}v^n \nonumber \\&- \Delta t \frac{1}{\tau \varepsilon^2 + \Delta t a_{ii}}\left[ \sum_{j=1}^{i-1} \tilde{a}_{ij} \left( \frac{\tau }{4} \nabla_x \cdot B(u^{(j)}) + \tau \varepsilon^2 \nabla_x q^{(j)} -  F(u^{(j)})\right)\right] \label{V_i_A}\\ \nonumber &- \Delta t \sum_{j=1}^{i-1} a_{ij} \frac{1}{\tau \varepsilon^2 + \Delta t a_{ii}} v^{(j)}.
	\end{align}
Then, by inserting the above expression into the first equation of \eqref{InternalStages}
and by taking the limit $\varepsilon \rightarrow 0$, we obtain 
	\begin{align}\nonumber
		u^{(i)} &= u^n - \Delta t \left[\sum_{j=1}^{i-1} a_{ij} \nabla_x \cdot B(v^{(j)}) + \sum_{j=1}^i a_{ij} \nabla_x  \theta^{(j)} \right] \\ &- \Delta t \nabla_x \cdot B\left( -\sum_{j=1}^{i-1} \tilde{a}_{ij} \left( \frac{\tau }{4} \nabla_x \cdot B(u^{(j)}) -  F(u^{(j)})\right) - \sum_{j=1}^{i-1} a_{ij} v^{(j)} \right).
	\end{align}
	Rewriting gives
	\begin{align}\nonumber
		u^{(i)} &= u^n - \Delta t \left[\sum_{j=1}^{i-1} a_{ij} \nabla_x \cdot B(v^{(j)}) + \sum_{j=1}^i a_{ij} \nabla_x  \theta^{(j)} \right] \\ &- \Delta t \nabla_x \cdot B\left( -\sum_{j=1}^{i-1} \tilde{a}_{ij} \left( \frac{\tau }{4} \nabla_x \cdot B(u^{(j)}) -  F(u^{(j)})\right) \right)  + \Delta t \sum_{j=1}^{i-1} a_{ij} \nabla_x \cdot B (v^{(j)}) . \label{u_i_5}
	\end{align}
We now consider the stage equations for $\theta$ and $i \geq 1$:
	\begin{align}
		\theta^{(i)} = \theta^n - \Delta t\left[\frac{1}{2 \varepsilon^2}\sum_{j=1}^{i-1} a_{ij} \nabla_x \cdot u^{(j)} \right] - \Delta t \frac{1}{2\varepsilon^2} a_{ii} \nabla_x \cdot u^{(i)}_* + \Delta t^2 \frac{1}{2\varepsilon^2} a_{ii}^2 \Delta_x  \theta^{(i)},
	\end{align}
	where
	\begin{align}
		u^{(i)}_* &= u^n - \Delta t \left[\sum_{j=1}^i a_{ij} \nabla_x \cdot B(v^{(j)}) + \sum_{j=1}^{i-1} a_{ij} \nabla_x  \theta^{(j)} \right].
	\end{align}
	Rewriting gives
	\begin{align}
		\Delta_x \theta^{(i)} - \frac{2\varepsilon^2}{\Delta t^2 a_{ii}^2} \theta^{(i)} = - \frac{2\varepsilon^2}{\Delta t^2 a_{ii}^2} \theta^n + \frac{1}{\Delta t a_{ii}^2} \sum_{j=1}^{i-1} a_{ij} \nabla_x \cdot u^{(j)} +  \frac{1}{\Delta t a_{ii}} \nabla_x \cdot u^{(i)}_*. \label{Theta_i_5}
	\end{align}
Next, we show inductively that all internal stages $u^{(i)}$ fulfil the divergence-free condition in the limit $\varepsilon \rightarrow 0$: For the base case $i= 1$, this is true because $u^{(1)} = u^n$ and the initial velocity field is divergence free for hypothesis. For the inductive step $(i-1) \mapsto i$, \eqref{u_i_5} gives
	 \begin{align}\nonumber
		\nabla_x \cdot u^{(i)} &= \nabla_x \cdot u^n - \Delta t \left[\sum_{j=1}^{i-1} a_{ij} \nabla_x \cdot \nabla_x \cdot B(v^{(j)}) + \sum_{j=1}^{i} a_{ij} \Delta_x  \theta^{(j)} \right] \\ &+ \Delta t \nabla_x \cdot \nabla_x \cdot B\left( \sum_{j=1}^{i-1} \tilde{a}_{ij} \left( \frac{\tau }{4} \nabla_x \cdot B(u^{(j)}) -  F(u^{(j)})\right) \right)  + \Delta t \sum_{j=1}^{i-1} a_{ij}\nabla_x \cdot \nabla_x \cdot B (v^{(j)}) . \label{induction_step_5}
	\end{align}
	Using \eqref{Theta_i_5} and the induction hypothesis $\nabla_x \cdot u^{(i-1)} = 0$ gives 
	\begin{align}
	\sum_{j=1}^{i} a_{ij} \Delta_x  \theta^{(j)} = \sum_{j=1}^{i-1} a_{ij} \Delta_x  \theta^{(j)} + \frac{1}{\Delta t} \left( \nabla_x \cdot u^n - \Delta t \left(\sum_{j=1}^{i}a_{ij}\nabla_x \cdot \nabla_x \cdot B(v^{(j)}) +\sum_{j=1}^{i-1} a_{ij} \Delta_x  \theta^{(j)} \right)\right). \label{sum_lapl_theta_5}
	\end{align}
	On the other hand, for $\varepsilon \rightarrow 0$, equation \eqref{V_i} gives
	\begin{align}
	v^{(j)} = - \frac{1}{a_{jj}} \left( \sum_{k=1}^{j-1} \tilde{a}_{jk} \left(\frac{\tau}{4} \nabla_x \cdot B(u^{(k)}) - F(u^{(k)}) \right)  + \sum_{k=1}^{j-1} a_{jk} v^{(k)}\right). \label{v_j_eps_0_5}
	\end{align}
	Plugging then \eqref{sum_lapl_theta_5} and \eqref{v_j_eps_0_5} into \eqref{induction_step_5}, we obtain in the limit
	\begin{align}
	\nabla_x \cdot u^{(i)} = 0
	\end{align}
	Now, by using the following properties
	\begin{align}
		\nabla_x \cdot B(\nabla_x \cdot B(u)) &= \Delta_x u \\
		\nabla_x \cdot B(F(u)) &= (u\cdot \nabla_x) u - \nabla_x \left( \frac{|u|^2}{2}\right) + u \nabla \cdot u,
	\end{align}
	the divergence–free initial condition, as well as the fact that all internal stages $u^{(i)}$ are divergence-free, we get
	\begin{align}\label{u_s_5}
		u^{(i)} &= u^n - \Delta t \left[ \sum_{j=1}^i a_{ij} \nabla_x  \theta^{(j)} \right] \nonumber \\ &- \Delta t  \sum_{j=1}^{i-1} \tilde{a}_{ij} \left( -\frac{\tau }{4} \Delta_x u^{(j)} +  (u^{(j)}\cdot \nabla_x) u^{(j)} - \nabla_x \left( \frac{|u^{(j)}|^2}{2}\right)\right).  
	\end{align}
Now, due to the fact that we use an GSA IMEX RK scheme, the above relation is enough to state that the numerical solution becomes 
	\begin{align}\nonumber
		u^{n+1} &= u^n - \Delta t \left[ \sum_{i=1}^s w_{i} \nabla_x  \theta^{(i)} \right] \\ &- \Delta t  \sum_{i=1}^{s} \tilde{w}_{i} \left( -\frac{\tau }{4} \Delta_x u^{(i)} +  (u^{(i)}\cdot \nabla_x) u^{(i)} - \nabla_x \left( \frac{|u^{(i)}|^2}{2}\right)\right).  
	\end{align}
Finally, because by definition, we have
	\begin{align}\label{Theta_P_Prop_5}
		\theta^{(i)} = p^{(i)} + \frac{|u^{(i)}|^2}{2}
	\end{align}
and since $\sum_{i=1}^s \tilde{w}_i = \sum_{i=1}^s w_i$, we obtain for the numerical solution of the velocity field $u$ the following expression
	\begin{align}\label{Glg1}
		u^{n+1} &= u^n - \Delta t \left( \sum_{i=1}^{s} \tilde{w}_{i} \left( -\frac{\tau }{4} \Delta_x u^{(i)} +  (u^{(i)}\cdot \nabla_x) u^{(i)} \right)
		+  \sum_{i=1}^s w_{i} \nabla_x  p^{(i)} \right), 
	\end{align}
which is a consistent high order Implicit-Explicit time discretization of equation \eqref{INS1}.
We now consider the rewritten stage equations for $\theta$ \eqref{Theta_i_5}.
	Taking the limit $\varepsilon \rightarrow 0$, by using \eqref{u_s_5} as well as the divergence-free initial condition for $u^n$ and the fact that all internal stages of $u$ are divergence-free in the limit, we get 
	\begin{align}
		\Delta_x \theta^{(i)}  &=   \frac{1}{ a_{ii}} \nabla_x \cdot \left[ \sum_{j=1}^{i-1} \tilde{a}_{ij} \left( \frac{\tau }{4} \Delta_x u^{(j)} -  (u^{(j)}\cdot \nabla_x) u^{(j)} + \nabla_x \left( \frac{|u^{(j)}|^2}{2}\right)\right) \right]
		- \sum_{j=1}^{i-1} a_{ij} \Delta_x  \theta^{(j)}.
	\end{align}
The above equation can be recast as
	\begin{align}
		a_{ii}\Delta_x \theta^{(i)}  &= \nabla_x \cdot \left[ \sum_{j=1}^{i-1} \tilde{a}_{ij} \left( \frac{\tau }{4} \Delta_x u^{(j)} -  (u^{(j)}\cdot \nabla_x) u^{(j)}\right) \right] +\sum_{j=1}^{i-1} \tilde{a}_{ij} \Delta_x \frac{|u^{(j)}|^2}{2}
		- \sum_{j=1}^{i-1} a_{ij} \Delta_x  \theta^{(j)}
	\end{align}
	and due to the fact that a GSA IMEX RK scheme is employed, this is equivalent to
	\begin{align}
		\sum_{i=1}^{s} w_{i} \Delta_x  \theta^{(i)} = \nabla_x \cdot \left[ \sum_{i=1}^{s} \tilde{w}_{i} \left( \frac{\tau }{4} \Delta_x u^{(i)} -  (u^{(i)}\cdot \nabla_x) u^{(i)}\right) \right] +\sum_{i=1}^{s} \tilde{w}_{i} \Delta_x \frac{|u^{(i)}|^2}{2}.
	\end{align}
Thanks to \eqref{Theta_P_Prop_5} and $\sum_{i=1}^s w_i = \sum_{i=1}^s \tilde{w}_i$, we finally get
	\begin{align}\label{Glg2}
		\sum_{i=1}^{s} w_{i} \Delta_x  p^{(i)} = \nabla_x \cdot \left[ \sum_{i=1}^{s} \tilde{w}_{i} \left( \frac{\tau }{4} \Delta_x u^{(i)} -  (u^{(i)}\cdot \nabla_x) u^{(i)}\right) \right],
	\end{align}
which is a consistent high order Implicit-Explicit time discretization of equation \eqref{INS2}.
\end{proof}

\end{document}